\newcommand{\Subsection}[1]{\subsection{ #1} ${}^{}$}
\newtheorem{theorem}{Theorem}[section]
\newtheorem{lemma}[theorem]{Lemma}
\newtheorem{proposition}[theorem]{Proposition}
\newcounter{hypo}
\newcounter{hypoa}
\newenvironment{hyp}{  \begin{enumerate} \setcounter{enumi}{\value{hypo}} \item}{\stepcounter{hypo} \end{enumerate}}
\def\C{{\mathbb C}}
\def\N{{\mathbb N}} 
\def\R{{\mathbb R}} 
\def\Z{{\mathbb Z}}
\def\CE{\mathcal {E}}
\def\CH{\mathcal {H}}
\def\CO{\mathcal {O}}
\def\CR{\mathcal {R}}
\def\CS{\mathcal {S}}
\def\SA{\mathscr {A}}
\def\SB{\mathscr {B}}
\def\SF{\mathscr {F}}
\def\SG{\mathscr {G}}
\def\ker{\mathop{\rm Ker}\nolimits}
\def\one{\mathds{1}}
\def\re{\mathop{\rm Re}\nolimits}
 \def\im{\mathop{\rm Im}\nolimits}
\def\Op{\mathop{\rm Op}\nolimits}
\newcommand{\divergence}{\operatorname{div}}
\newcommand{\supp}{\operatorname{supp}}
\def\arg{\mathop{\rm arg}\nolimits}
\def\dist{\mathop{\rm dist}\nolimits}
\def\diag{\mathop{\rm diag}\nolimits}
\def\<{\langle}
\def\>{\rangle}
\def\res{\mathop{\rm Res}\nolimits}
\def\MS{\mathop{\rm MS}\nolimits}
\title{Barrier-top resonances for non globally analytic potentials}
\author[J.-F. Bony]{Jean-Fran\c{c}ois Bony}
\address{Jean-Fran\c{c}ois Bony, IMB, CNRS (UMR 5251), Universit\'e de Bordeaux, 33405 Talence, France}
\email{bony@math.u-bordeaux.fr}
\author[S. Fujii\'e]{Setsuro Fujii\'e}
\address{Setsuro Fujii\'e, Department of Mathematical Sciences, Ritsumeikan University, 1-1-1 Noji-Higashi, Kusatsu, 525-8577 Japan}
\email{fujiie@fc.ritsumei.ac.jp}
\author[T. Ramond]{Thierry Ramond}
\address{Thierry Ramond, Laboratoire de Math\'ematiques d'Orsay, Univ. Paris-Sud, CNRS, Universit\'e Paris-Saclay, 91405 Orsay, France}
\email{thierry.ramond@math.u-psud.fr}
\author[M. Zerzeri]{Maher Zerzeri}
\address{Maher Zerzeri, Universit\'e Paris 13, Sorbonne Paris Cit\'e, LAGA, CNRS (UMR 7539), 93430 Villetaneuse, France}
\email{zerzeri@math.univ-paris13.fr}
\keywords{Resonances, semiclassical asymptotics, microlocal analysis, hyperbolic fixed point, propagation of singularities, Schr\"{o}dinger operators}
\subjclass[2010]{35B34, 81Q20, 37C25, 35J10, 35P20, 35S10}
\thanks{\textbf{Acknowledgments:} The second author was partially supported by the JSPS KAKENHI Grant 15K04971.}
\begin{document}

\begin{abstract}
We give the semiclassical asymptotic of barrier-top resonances for Schr\"{o}dinger operators on $\R^{n}$, $n \geq 1$, whose potential is $C^{\infty}$ everywhere  and analytic at infinity. In the globally analytic setting, this has already been obtained in \cite{BrCoDu87_02,Sj87_01}. Our proof is based on a propagation of singularities theorem at a hyperbolic fixed point that we establish here. This last result refines a theorem of \cite{BoFuRaZe07_01}, and its proof follows another approach.
\end{abstract}

\maketitle

%\setcounter{tocdepth}{3}
%\tableofcontents

\section{Introduction} \label{s1}

In this paper, we consider mainly Schr\"{o}dinger operators $P$ on $L^{2} ( \R^{n} )$, $n \geq 1$,
\begin{equation}\label{a56}
P = - h^{2} \Delta + V ( x ),
\end{equation}
where $V$ is a real-valued smooth function, which is supposed to be analytic outside of a compact set and to vanish at infinity. The resonances of $P$ near the real axis are thus well-defined through the analytic distortion method due to Aguilar and Combes \cite{AgCo71_01} and Hunziker \cite{Hu86_01}, and we denote $\res ( P )$ their set.

We are interested in resonances of $P$ generated by a unique non-degenerate global maximum of $V$. We can suppose that it is at $0$ and that 
\begin{equation}
V ( x ) = E_{0} - \sum_{j = 1}^{n} \frac{\lambda_{j}^{2}}{4} x_{j}^{2} + \CO ( x^{3} ) , 
\end{equation}
with $E_{0} > 0$ and $0 < \lambda_{1} \leq \dots \leq \lambda_{n}$. In fact, our precise assumption is that the trapped set at energy $E_{0}$, that is the set of bounded classical trajectories with energy $E_{0}$, is $\{ ( 0 , 0 ) \}$.

When $V$ is globally analytic, this question has been solved by Sj\"{o}strand \cite{Sj87_01} in the general case of a non-degenerate critical point, and by Briet, Combes and Duclos \cite{BrCoDu87_02} under a virial assumption (see also the third author \cite{Ra96_01} in dimension $1$). They have proved that, in any complex neighborhood of $E_0$ of size $h$, there is a bijection $b_{h}$ between $\res_{0} ( P )$ and $\res ( P )$, with $b_{h} ( z ) = z + o ( h )$ and 
\begin{equation}\label{a57}
\res_{0} ( P ) = \bigg\{ E_{0} - i h \sum_{j = 1}^{n} \lambda_{j} \Big( \frac{1}{2} + \alpha_{j} \Big) ; \ \alpha \in \N^{n} \bigg\} ,
\end{equation}
is what we call the set of pseudo-resonances. They obtained further a full asymptotic expansion for isolated resonances. Then, under the assumption that the $\lambda_j$'s are $\Z$-independent, Kaidi and Kerdelhu{\'e} \cite{KaKe00_01} have extended this result to disks of size $h^\delta$ for any $\delta \in ]0,1]$. Eventually, Hitrik, Sj\"ostrand and V{\~u} Ng{\d{o}}c \cite{HiSjVu07_01} have obtained a similar result in dimension 2 for disks of size $1$. This kind of resonances appears naturally in the context of general relativity, in particular for the study of black holes as in S{\'a} Barreto and Zworski \cite{SaZw97_01}.

In all these works, the potential was supposed to be analytic in a whole neighborhood of $\R^{n}$. We obtain here the same kind of result, with the weaker assumption that $V$ is $C^{\infty}$ everywhere and analytic at infinity. Our approach also provides a polynomial estimate for the cut-off resolvent away from the resonances, and we are able to describe the resonant states. However, we do not control the multiplicity of the resonances. The precise statements are given in Section \ref{s5}. Note that, for shape resonances, Lahmar-Benbernou, Martinez and the second author \cite{FuLaMa11_01} have already proved that one can relax the analyticity hypothesis in the work of Helffer and Sj{\"o}strand \cite{HeSj86_01}. Concerning the study of the scattering amplitude at a barrier-top, the analyticity of the potential is not necessary (see e.g. \cite{AlBoRa08_01,CoPaSc15_01}).

As a matter of fact, our result about resonances is closely related to the description of the propagation of singularities in a neighborhood of a hyperbolic fixed point, for energies $z$ close to the critical energy $E_{0}$. Indeed we have proved in Section 8 of \cite{BoFuRaZe16_01} that the absence of resonances follows from the uniqueness of the solution to the microlocal Cauchy problem at the trapped set
\begin{equation} \label{a70}
\left\{
\begin{aligned}
&( P - z ) u = v &&\text{microlocally near the trapped set,} \\
&u = u_{0} &&\text{microlocally in the region incoming from infinity.}
\end{aligned} 
\right.
\end{equation}
In this paper, we show uniqueness for \eqref{a70} when the energy $z$ is at distance $h$ of $\res_{0} ( P )$. The corresponding theorem can be found in Section \ref{s6}. In a previous work \cite{BoFuRaZe07_01}, such a result was obtained for energies $z$ outside of some exceptional set which was not completely described. In other words, we prove here that the propagation of singularities results of \cite{BoFuRaZe07_01} hold whenever the energy $z$ is not close to a pseudo-resonance.

The method we use here is quite different and less technical compared with that in \cite{BoFuRaZe07_01}. It is based on commutations with annihilation operators (as for the computation of the spectrum of the harmonic oscillator) rather than on energy estimates. For the sake of simplicity, let us explain this approach for $P = - h^{2} \Delta - x^{2}$ in dimension $1$. If we denote the annihilation-like operator $A = -i h \partial_{x} -x$, a direct computation gives
\begin{equation} \label{a71}
A ( P - z ) = ( P - 2 i h - z ) A .
\end{equation}
Consider now a solution $u$ of \eqref{a70} with $v = u_{0} = 0$ and $z \in B ( 0 , C h )$ with $B ( a , r ) = \{ z ; \ \vert z - a \vert < r \}$. Applying $m$ times \eqref{a71}, we get
\begin{equation}
( P - 2 m i h - z ) A^{m} u = A^{m} ( P - z ) u = \CO ( h^{\infty} ) ,
\end{equation}
for all $m \in \N$. Since the operator $P - 2 m i h - z$ is invertible for $m > C / 2$, we deduce $A^{m} u = \CO ( h^{\infty} )$ for these values of $m$. A standard argument of microlocal analysis implies that $u$ is a Lagrangian distribution associated to $\Lambda_{+}$, the characteristic manifold of the pseudodifferential operator $A$. A similar idea was used by Hassell, Melrose and Vasy \cite{HaMeVa04_01,HaMeVa08_01} in a different context. Using that $u$ is a Lagrangian distribution, \eqref{a70} becomes a transport equation on the symbol of $u$. Then, if $z$ is away from $\res_{0} ( P )$, we can conclude that $u = 0$ microlocally near $( 0 , 0 )$. This approach may be carried out in other contexts, even if only in the case of a general non-degenerate critical point. Each time, the notion of Lagrangian distribution associated to $\Lambda_{+}$ used above should be replaced by an appropriate class of functions.

For the reader's convenience, we recall some notations and terminology of microlocal analysis in Appendix \ref{s7}.

\section{Setting and results} \label{s2}

\Subsection{Asymptotic of barrier-top resonances} \label{s5}

We state here the asymptotic of the barrier-top resonances for non-globally analytic potentials. We consider the semiclassical Schr\"{o}dinger operator $P$ on $L^{2} ( \R^{n} )$ given in \eqref{a56} and we assume that
\begin{hyp} \label{h1}
$V \in C^{\infty} ( \R^{n} ; \R )$ extends holomorphically in the sector 
\begin{equation*}
\CS = \big\{ x \in \C^{n} ; \ \vert \re x \vert > C \text{ and } \vert \im x \vert \leq \delta \vert x \vert \big\} ,
\end{equation*}
for some $C , \delta > 0$. Moreover, $V ( x ) \to 0$ as $x \to \infty$ in $\CS$.
\end{hyp}
Under this assumption, one can define the distorted operator $P_{\theta}$ of angle $\theta > 0$ small enough. Its spectrum is discrete in $\CE_{\theta} = \{ z \in \C ; \ - 2 \theta < \arg z \leq 0 \}$, and the resonances of $P$ are the eigenvalues of $P_{\theta}$ in $\CE_{\theta}$. They can also be defined as the poles of the meromorphic extension of $( P - z )^{- 1} : L^{2}_{\rm comp} ( \R^{n} ) \to L^{2}_{\rm loc} ( \R^{n} )$ from the upper complex half-plane. We send back the reader to Section 2 of \cite{BoFuRaZe16_01} (and the references given there) for the precise definitions and more details on the resonances.

We denote $p ( x , \xi ) = \xi^{2} + V ( x )$ the symbol of $P$. The associated Hamiltonian vector field is
\begin{equation*}
H_{p} = \partial_{\xi} p \cdot \partial_{x} - \partial_{x} p \cdot \partial_{\xi} = 2 \xi \cdot \partial_{x} - \nabla V ( x ) \cdot \partial_{\xi} .
\end{equation*}
Integral curves $t \mapsto \exp ( t H_{p} )( x , \xi )$ of $H_{p}$ are called Hamiltonian or bicharacteristic trajectories, and $p$ is constant along such curves. We also suppose that
\begin{hyp} \label{h2}
$V$ has a non-degenerate maximum at $x=0$ and
\begin{equation*}
V (x) = E_{0} - \sum_{j = 1}^{n} \frac{\lambda_{j}^{2}}{4} x_{j}^{2} + \CO ( x^{3} ) ,
\end{equation*}
with $E_{0}> 0$ and $0 < \lambda_{1} \leq \cdots \leq \lambda_{n}$.
\end{hyp}
We define the trapped set at energy $E$ for $P$ as
\begin{equation*}
K ( E ) = \big\{ ( x , \xi ) \in p^{- 1} ( E ) ; \ t \mapsto \exp ( t H_{p} ) ( x , \xi ) \text{ is bounded} \big\} .
\end{equation*}
For $E > 0$, $K ( E )$ is compact and stable by the Hamiltonian flow. We finally assume that
\begin{hyp} \label{h3}
The trapped set at energy $E_{0}$ is $K ( E_{0} ) = \{ ( 0,0) \}$.
\end{hyp}
In particular, $x = 0$ is the unique global maximum for $V$. Moreover, there exists a pointed neighborhood of $E_{0}$ in which all the energy levels are non trapping.

The previous assumptions imply that $( 0 , 0 )$ is a hyperbolic fixed point for $H_{p}$. The stable/unstable manifold theorem ensures the existence of the incoming Lagrangian manifold $\Lambda_{-}$ and the outgoing Lagrangian manifold $\Lambda_{+}$ characterized by
\begin{equation*}
\Lambda_{\pm} = \big\{ ( x , \xi ) \in T^{*} \R^{n} ; \ \exp ( t H_{p} ) ( x , \xi ) \to ( 0 , 0 ) \text{ as } t \to \mp \infty \big\} .
\end{equation*}
They are stable by the Hamiltonian flow and included in $p^{- 1} ( E_{0} )$. Eventually, there exist two smooth functions $\varphi_{\pm}$, defined in a vicinity of $0$, satisfying
\begin{equation} \label{a27}
\varphi_{\pm} ( x ) = \pm \sum_{j = 1}^{n} \frac{\lambda_{j}}{4} x_{j}^{2} + \CO ( x^{3} ) ,
\end{equation}
and such that $\Lambda_{\pm} = \Lambda_{\varphi_{\pm}} = \{ ( x , \xi ) ; \ \xi = \nabla \varphi_{\pm} (x) \}$ near 
$( 0 , 0 )$.

\begin{figure}
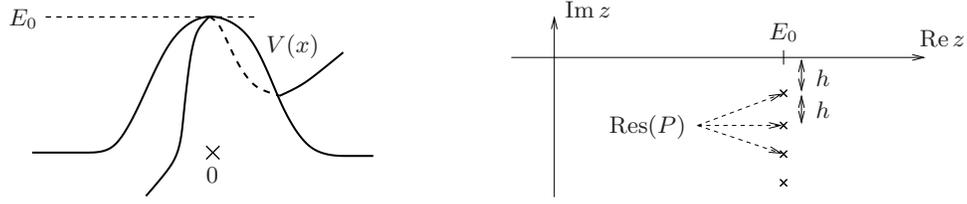
%[!h]
\begin{center}
\input{picture04.pstex_t} $\qquad \qquad$ \input{picture05.pstex_t}
\end{center}
\caption{A potential as in Theorem \ref{a37} and the corresponding resonances.} \label{f2}
\end{figure}

We recall that $\res ( P )$ is the set of resonances of $P$, and that $\res_{0} ( P )$ is the set of pseudo-resonances given in \eqref{a57}. For $A , B , C$ subsets of $\C$ and $\delta \geq 0$, we say that $\dist ( A , B ) \leq \delta$ in $C$ if and only if
\begin{align*}
&\forall a \in A \cap C , \quad \exists b \in B , \qquad \vert a - b \vert \leq \delta ,   \\
\text{and} \quad &\forall b \in B \cap C , \quad \exists a \in A , \qquad \vert a - b \vert \leq \delta .
\end{align*}
Concerning the barrier-top resonances, our main result is the following.

\begin{theorem}[Asymptotic of resonances]\sl \label{a37}
Assume \ref{h1}--\ref{h3} and let $C > 0$. In the domain $B ( E_{0} , C h )$, we have
\begin{equation*}
\dist \big( \res ( P ) , \res_{0} ( P ) \big) = o ( h ) ,
\end{equation*}
as $h$ goes to $0$. Moreover, for all $\chi \in C^{\infty}_{0} ( \R^{n} )$ and $\varepsilon > 0$, there exists $M > 0$ such that
\begin{equation} \label{a38}
\big\Vert \chi ( P -z )^{-1} \chi \big\Vert \leq h^{- M} ,
\end{equation}
uniformly for $h$ small enough and $z \in B ( E_{0} , C h ) \setminus ( \res_{0} ( P ) + B ( 0 , \varepsilon h ) )$.
\end{theorem}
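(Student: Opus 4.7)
The plan is to reduce Theorem~\ref{a37} to a quantitative microlocal uniqueness statement for the Cauchy problem \eqref{a70} at the fixed point $(0,0)$, and then establish this uniqueness via the annihilation-operator method sketched in the introduction. Section~8 of \cite{BoFuRaZe16_01} already carries out the reduction: if for every $z \in B(E_{0}, Ch) \setminus ( \res_{0}(P) + B(0, \varepsilon h))$ the only solution of \eqref{a70} with $v = 0$ and $u_{0} = \CO(h^{\infty})$ is $u = \CO(h^{\infty})$ microlocally near $(0,0)$, with a polynomial loss $h^{-M}$, then no such $z$ is a resonance and the cut-off resolvent satisfies \eqref{a38}. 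Applied to each pseudo-resonance, this forces every true resonance in $B(E_{0}, Ch)$ to lie within $o(h)$ of $\res_{0}(P)$; a complementary quasi-mode construction on $\Lambda_{+}$ shows conversely that each pseudo-resonance attracts a true resonance, so that $\dist(\res(P), \res_{0}(P)) = o(h)$ in $B(E_{0}, Ch)$.

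For the core microlocal statement, I would first construct, for each $j = 1, \ldots, n$, an $h$-pseudodifferential operator $A_{j}$ whose principal symbol vanishes on $\Lambda_{+}$ and agrees at $(0,0)$ with the annihilator $\xi_{j} - (\lambda_{j}/2) x_{j}$ of the quadratic model. Adjusting the subprincipal symbols inductively -- a Borel construction compatible with the merely $C^{\infty}$ regularity of $V$ -- one arranges the commutator identity
\begin{equation*}
A_{j} ( P - z ) = ( P - z - i h \lambda_{j} ) A_{j} + R_{j} ,
\end{equation*}
with $R_{j} = \CO(h^{\infty})$ microlocally at $(0,0)$. Setting $A^{\alpha} = A_{1}^{\alpha_{1}} \cdots A_{n}^{\alpha_{n}}$ for $\alpha \in \N^{n}$ and iterating,
\begin{equation*}
( P - z - i h \, \lambda \cdot \alpha ) A^{\alpha} u = A^{\alpha} (P - z) u + \CO(h^{\infty}) = \CO(h^{\infty}) .
\end{equation*}
For $\vert \alpha \vert$ larger than some $N = N(C)$, the shifted parameter $z + i h \lambda \cdot \alpha$ has imaginary part greater than $2 C h$, hence lies above the spectrum of $P$; combining $L^{2}$ invertibility with the incoming condition $A^{\alpha} u = \CO(h^{\infty})$ then yields $A^{\alpha} u = \CO(h^{\infty})$ microlocally near $(0,0)$.

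Knowing $A^{\alpha} u = \CO(h^{\infty})$ for all $\vert \alpha \vert \geq N$, a standard argument -- analogous to the Hermite analysis for the harmonic oscillator and close in spirit to \cite{HaMeVa04_01,HaMeVa08_01} -- identifies $u$ modulo $\CO(h^{\infty})$ with a finite linear combination of WKB states on $\Lambda_{+}$ indexed by $\beta \in \N^{n}$ with $\vert \beta \vert < N$. Substituting this ansatz into $(P - z) u = \CO(h^{\infty})$, the eikonal equation is automatic since $\Lambda_{+} \subset p^{-1}(E_{0})$, and the chain of transport equations reduces to a finite linear system on the WKB coefficients whose matrix has eigenvalues $E_{0} - z - i h \sum_{j} \lambda_{j} (1/2 + \beta_{j})$, i.e., the gaps from $z$ to the pseudo-resonances. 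When $z$ lies at distance $\geq \varepsilon h$ from $\res_{0}(P)$ this system is invertible with norm $\CO((\varepsilon h)^{-1})$, forcing every coefficient to be $\CO(h^{\infty})$; hence $u = \CO(h^{\infty})$ microlocally near $(0,0)$, and tracking the $h$-losses through the commutator iteration and the transport inversion produces the polynomial bound needed in \eqref{a38}.

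The main obstacle I anticipate is the rigorous construction of the $A_{j}$ and the control of the error $R_{j}$: the exact algebraic identity \eqref{a71} valid for the quadratic 1D model has no direct analogue for a general non-degenerate maximum with distinct $\lambda_{j}$'s, and one must combine a Birkhoff-type symplectic normal form at $(0,0)$ with an iterative resummation of the lower-order symbols of $A_{j}$. The merely $C^{\infty}$ regularity of $V$ restricts this construction to produce identities only modulo $\CO(h^{\infty})$, but this is precisely sufficient because every step of the argument only uses microlocal information at the trapped set, where such errors are harmless.
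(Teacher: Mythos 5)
Your macro-structure matches the paper: reduce to microlocal uniqueness of \eqref{a70} via Section~8 of \cite{BoFuRaZe16_01}, establish that uniqueness by an annihilation-operator scheme, and prove existence of a resonance near each pseudo-resonance by a test-function/contour argument (the paper's Lemma~4.2). However, the central technical claim of your uniqueness argument has a genuine gap.

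You assert that by adjusting subprincipal symbols (Borel construction) one can arrange $A_{j}(P-z)=(P-z-ih\lambda_{j})A_{j}+\CO(h^{\infty})$ microlocally near $(0,0)$. This does not work as stated. When one computes $\{p_{0},a_{j}\}$ with $a_{j}=\xi_{j}-\partial_{x_{j}}\varphi_{+}$ (times a cutoff), the error beyond $-\lambda_{j}a_{j}$ is of the form $\sum_{k}r\,a_{k}$ with $r=\CO(|x|+|\xi|)$ near $(0,0)$; this term is $\CO(1)$ in the symbol calculus, i.e.\ of the \emph{same} $h$-order as the main term, so it cannot be absorbed by lower-order corrections $h\,a_{j}^{1}+h^{2}a_{j}^{2}+\cdots$. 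Killing it amounts to solving the homological equation $\{p_{0},a\}=-\lambda_{j}a$ in a full neighborhood, which is the Sternberg/Birkhoff linearization problem and carries non-resonance obstructions on the $\lambda_{j}$'s that the theorem does not assume (the $\lambda_{j}$'s may be equal or $\Z$-dependent). The paper accepts a non-$\CO(h^{\infty})$ remainder: Lemma~3.1 keeps error operators $R_{\alpha,\beta}$ (symbol vanishing at $(0,0)$) and $M$ (supported on $\nabla\psi_{3}$), and then Proposition~3.2 controls them by passing to the $n^{d}\times n^{d}$ matrix $Q^{d}$ and proving a resolvent estimate $\|(Q^{d}-z)^{-1}\|\lesssim h^{-1}$, which requires the additional escape function $\widetilde g=x\cdot\xi\,\psi_{4}$. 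This is the technically hard step that your Borel construction is implicitly bypassing.

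A second gap: you invert $P-z-ih\lambda\cdot\alpha$ ``above the spectrum,'' but $u$ is only a \emph{microlocal} solution on $\Omega$. Commuting $A^{\alpha}$ through the cutoff that localizes near $\Omega$ produces boundary contributions from the outgoing region, where $u$ is not assumed small. The paper damps these by conjugating with $e^{t|\ln h|\,g}$, where $g\gtrsim c>0$ on $\SA_{+}$ (see \eqref{a26} and \eqref{a19}), so that the right-hand side becomes $\CO(h^{ct})$ for $t$ as large as wanted (Lemma~3.3). Your argument omits this entirely. Finally, your ``finite linear system indexed by $|\beta|<N$ with eigenvalues $E_{0}-z-ih\sum\lambda_{j}(\tfrac12+\beta_{j})$'' is a heuristic rather than a proof: the paper instead identifies $u$ with a single Lagrangian distribution $a\,e^{i\varphi_{+}/h}$, $a\in S(h^{-M})$ (Lemma~3.4), derives a scalar transport equation \eqref{a31}, reads off Taylor-coefficient estimates at $0$ (Lemma~3.5, which is where the distance of $z$ to $\res_{0}(P)$ enters), and then propagates along $H_{p_{0}}^{+}$ to get $a\in S(h^{\infty})$ on a smaller neighborhood (Lemma~3.6). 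These steps are what supply the polynomial loss in \eqref{a38} and must be carried out; they do not follow from the finite-system picture.
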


The distribution of the (pseudo-)resonances is illustrated in Figure \ref{f2}. From Proposition D.1 of \cite{BoFuRaZe16_01}, the inequality \eqref{a38} is equivalent to a polynomial estimate of the distorted resolvent, that is $\Vert ( P_{\theta} - z )^{- 1} \Vert \leq h^{- N}$ with $\theta = h \vert \ln h \vert$.

The previous theorem provides the asymptotic distribution of resonances as a set, but gives no bijection between resonances and pseudo-resonances, as it was the case in \cite{BrCoDu87_02,Ra96_01,Sj87_01}. In other words, there may be more than one resonance near each pseudo-resonance (and reciprocally). That the multiplicity is unknown comes from our method of proof. Nevertheless, following the proof of Proposition 4.2 of \cite{BoFuRaZe07_01} (see also Section 3 of \cite{BoFuRaZe11_01}), one can show that the number of resonances counted with their multiplicity near each pseudo-resonance is uniformly bounded. It should also be possible to prove that number of resonances is greater or equal to the multiplicity of pseudo-resonances as in Proposition 4.6 of \cite{BoFuRaZe16_01}.

The results of this part are stated for Schr\"{o}dinger operators \eqref{a56} but hold true in more general settings. What is really needed is that the resonances of the pseudodifferential operator $P = \Op ( p )$ can be defined by complex distortion and that the trapped set at energy $E_{0}$ consists of a hyperbolic fixed point. Indeed, the results used in the proofs (that is Section 5 of \cite{BoFuRaZe07_01} and Theorem \ref{a1} below) are valid for pseudodifferential operators. In particular, if $P$ has a subprincipal symbol, say $p ( x, \xi , h ) = p_{0} ( x , \xi ) + h p_{1} ( x , \xi , h )$, then the set of pseudo-resonances \eqref{a57} should be replaced by
\begin{equation*}
\res_{0} ( P ) = \bigg\{ E_{0} - i h \sum_{j = 1}^{n} \lambda_{j} \Big( \frac{1}{2} + \alpha_{j} \Big) + h p_{1} ( 0 , 0 , h ) ; \ \alpha \in \N^{n} \bigg\} .
\end{equation*}
Note that the ``black box'' framework of Sj\"{o}strand and Zworski \cite{SjZw91_01} allows to define the resonances for a large class of operators.

Theorem \ref{a37} provides the asymptotic of the resonances modulo $o ( h )$. Under certain circumstances, it is possible to prove that they have an asymptotic expansion in powers of $h$. In this direction, we state the following result which is similar to Proposition 0.3 of Sj\"{o}strand \cite{Sj87_01}.

\begin{proposition}[Asymptotic modulo $\CO ( h^{\infty} )$]\sl \label{a58}
In the setting of Theorem \ref{a37}, let $\alpha \in \N^{n}$ be such that the corresponding element $z_{0} ( h ) = E_{0} - i h \sum_{j} \lambda_{j} ( \alpha_{j} + 1 / 2 )$ of $\res_{0} ( P )$ is simple. Then, there exist $\delta > 0$ and $z_{\infty} ( h )$ satisfying $z_{\infty} ( h ) \simeq E_{0} + E_{1} h + E_{2} h^{2} + \cdots$ as an asymptotic expansion with $E_{1} = - i \sum_{j} \lambda_{j} ( \alpha_{j} + 1 / 2 )$ and
\begin{equation*}
\dist \big( \res ( P ) , \{ z_{\infty} ( h ) \} \big) = \CO ( h^{\infty} ) ,
\end{equation*}
in $B ( z_{\infty} ( h ) , \delta h )$. Moreover, for all $N > 0$ and $\chi \in C^{\infty}_{0} ( \R^{n} )$, there exists $M > 0$ such that
\begin{equation*}
\big\Vert \chi ( P - z )^{-1} \chi \big\Vert \leq h^{- M} ,
\end{equation*}
uniformly for $h$ small enough and $z \in B ( z_{\infty} ( h ) , \delta h ) \setminus B ( z_{\infty} ( h ) , h^{N} )$.
\end{proposition}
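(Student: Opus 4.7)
The plan is to upgrade the argument that proves Theorem \ref{a37} with a WKB construction to infinite order, coupled with a Grushin reduction in the spirit of Proposition 0.3 of \cite{Sj87_01}. The simplicity of $z_{0}(h)$ enters as the Fredholm solvability condition that makes the construction go through at every order.

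First I would build a formal expansion $z_{\infty}(h) \sim E_{0} + E_{1} h + E_{2} h^{2} + \cdots$, with $E_{1} = - i \sum_{j} \lambda_{j} ( \alpha_{j} + 1 / 2 )$, together with a formal symbol $a ( x ; h ) \sim \sum_{k \geq 0} a_{k} ( x ) h^{k}$ defined near $x = 0$, such that
\begin{equation*}
( P - z_{\infty}(h) ) \bigl( a ( x ; h ) \, e^{i \varphi_{+} ( x ) / h} \bigr) = \CO ( h^{\infty} ) \, e^{i \varphi_{+} ( x ) / h} .
\end{equation*}
Plugging the Ansatz in and identifying powers of $h$ yields a hierarchy of transport equations on $\Lambda_{+}$ of the form $( \CT - E_{1} ) a_{k} = F_{k} ( a_{0} , \ldots , a_{k-1} , E_{2} , \ldots , E_{k} )$, where $\CT$ is the Lie derivative along $H_{p}$ on $\Lambda_{+}$ corrected by a $1/2$-subprincipal term. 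After conjugation by $e^{i \varphi_{+} / h}$, the operator $\CT$ is unitarily equivalent to an anisotropic harmonic-oscillator-type operator whose spectrum is $\{ - i \sum_{j} \lambda_{j} ( \beta_{j} + 1 / 2 ) ; \ \beta \in \N^{n} \}$. The simplicity assumption says that $E_{1}$ is a simple eigenvalue of this model operator, with eigenspace spanned by the Hermite-type function $a_{0}$ attached to $\alpha$; the Fredholm alternative then allows one to choose $E_{k}$ at each step so as to cancel the obstruction and to solve for $a_{k}$ uniquely modulo $\C a_{0}$, which inductively produces both formal series.

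Using Borel summation, I would realise $z_{\infty}(h)$ and $a ( x ; h )$ as genuine functions of $h$, multiply $u_{\infty} = a ( x ; h ) e^{i \varphi_{+} ( x ) / h}$ by a cutoff, and transfer this approximate resonant state to the distorted setting to obtain $\widetilde{u}_{h}$ with $\Vert \widetilde{u}_{h} \Vert \asymp 1$ and $( P_{\theta} - z_{\infty}(h) ) \widetilde{u}_{h} = \CO ( h^{\infty} )$. A parallel construction with $\Lambda_{-}$, exchanging incoming and outgoing roles (equivalently, working with $P^{*}$), produces an approximate co-resonant state $\widetilde{v}_{h}$. I would then set up the Grushin problem
\begin{equation*}
\CP ( z ) = \begin{pmatrix} P_{\theta} - z & R_{-} \\ R_{+} & 0 \end{pmatrix} , \qquad R_{-} c = c \, \widetilde{u}_{h} , \quad R_{+} u = \langle u , \widetilde{v}_{h} \rangle .
\end{equation*}
The uniqueness for the microlocal Cauchy problem \eqref{a70} established in Section \ref{s6} (used exactly as in the proof of Theorem \ref{a37}) implies that $\CP ( z )$ is invertible with $\CO ( h^{- M} )$ norm, uniformly for $z \in B ( z_{0}(h) , \delta h )$, provided $\delta$ is chosen small enough to separate $z_{0}(h)$ from the remaining elements of $\res_{0} ( P )$. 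The effective Hamiltonian $E_{-+} ( z , h )$ is then a scalar symbol, and by construction $E_{-+} ( z , h ) = z - z_{\infty}(h) + \CO ( h^{\infty} )$; Rouch\'e's theorem in $B ( z_{\infty}(h) , \delta h )$ yields the $\CO ( h^{\infty} )$ distance bound on the resonances, and the standard expression of $( P_{\theta} - z )^{-1}$ in terms of $\CP ( z )^{-1}$ and $E_{-+}^{-1}$ delivers the polynomial resolvent estimate away from a disk of radius $h^{N}$.

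The main obstacle lies in the transport step in the smooth (not globally analytic) framework: one must construct the symbols $a_{k}$ and establish solvability of $( \CT - E_{1} ) a_{k} = F_{k}$ in a class of Lagrangian-type distributions on $\Lambda_{+}$ that is stable under the annihilation-operator machinery underlying Theorem \ref{a1}, and then control the Borel resummation finely enough so that $\widetilde{u}_{h}$ remains an $\CO ( h^{\infty} )$ quasi-mode after complex distortion. Once this is done, the rest of the argument is a rather routine Grushin/Rouch\'e mechanism, provided the sharp form of Theorem \ref{a1} from Section \ref{s6} gives a polynomial resolvent bound outside the exceptional disk $B ( z_{\infty}(h) , h^{N} )$.
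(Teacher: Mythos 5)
Your route is genuinely different from the paper's. You propose the ``classical'' Sj\"ostrand strategy: construct a full WKB quasi-mode $a(x;h)\,e^{i\varphi_+/h}$ together with $z_\infty(h)$ order by order (using the Fredholm alternative on the model transport operator to determine the $E_k$'s), Borel-sum, build a co-resonant state, set up a Grushin problem, and conclude via Rouch\'e. The paper instead never builds a quasi-mode or a Grushin problem for this Proposition: it recycles the uniqueness argument of Section~\ref{s3}, namely it takes a putative solution $u$ of the microlocal Cauchy problem, writes $u = a\,e^{i\varphi_+/h}$ via Lemma~\ref{a28}, expands the transport equation~\eqref{a59} in Taylor coefficients $a_\alpha=\partial_x^\alpha a(0,h)$, and observes that eliminating all $a_\alpha$ with $\alpha\neq\alpha^0$ produces a single scalar relation $\SG(\sigma,h)\,a_{\alpha^0}=\CO(h^\infty)$ with $\SG_0(\sigma)=i(\sigma_0-\sigma)$. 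The quantization condition $\SG(\sigma_\infty(h),h)=0$ then defines $z_\infty(h)$ by the implicit function theorem, and away from that root the factor $\SG(\sigma,h)$ is $\gtrsim h^{N-1}$, forcing $a=\CO(h^\infty)$; existence of an actual resonance near $z_\infty(h)$ is inherited from Theorem~\ref{a37}. The paper's method buys a very short proof precisely because the quantization condition emerges as the obstruction in a uniqueness argument already in place, while your method would additionally require the full quasi-mode and Grushin machinery in the merely-$C^\infty$ framework.

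There is, however, a genuine gap in your proposal at the crucial step. You assert that ``the uniqueness for the microlocal Cauchy problem \eqref{a70} established in Section~\ref{s6} $\ldots$ implies that $\CP(z)$ is invertible with $\CO(h^{-M})$ norm, uniformly for $z\in B(z_0(h),\delta h)$'', and later hope that ``the sharp form of Theorem~\ref{a1} $\ldots$ gives a polynomial resolvent bound outside the exceptional disk $B(z_\infty(h),h^N)$''. But Theorem~\ref{a1}, as it stands, only yields uniqueness when $\dist(z,\Gamma_0(h))\geq\delta h$; for $z$ inside $B(z_0(h),\delta h)$ it says nothing, and in particular it does not give well-posedness of your Grushin problem near $z_0(h)$, nor any polynomial bound down to distance $h^N$. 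To obtain Grushin invertibility at these near-exceptional energies one must show a priori that the microlocal kernel at $(0,0)$ is at most one-dimensional there and that the rank-one correction captures it; establishing that is essentially equivalent to redoing the transport analysis to all orders in $h$ at $x=0$ — which is exactly what the paper's derivation of \eqref{a60}--\eqref{a68} and the function $\SG(\sigma,h)$ accomplishes. As written, your plan parks that essential refinement inside an unproved ``sharp form of Theorem~\ref{a1}'', so the Grushin step does not close.

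A secondary but related caveat: the claim that $\CT$ is ``unitarily equivalent to an anisotropic harmonic oscillator'' is only a heuristic. The transport operator is a first-order operator of the form $\sum_j\lambda_j x_j\partial_{x_j}+\text{lower order}$ acting on formal Taylor series at $0$; its ``spectrum'' $\{\sum_j\lambda_j\alpha_j\}$ is visible on monomials, but there is no unitary conjugation to a selfadjoint harmonic oscillator on $L^2$. In the smooth setting one must work with the Taylor recurrence (as the paper does) rather than a genuine spectral decomposition, and the Fredholm-alternative language has to be replaced by the finite-dimensional linear algebra of the recurrence \eqref{a60}, where the term $h\sum_{|\beta|=|\alpha|+2}d_\alpha^\beta a_\beta$ couples to higher Taylor coefficients and must be controlled inductively. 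This is not a dead end, but it is extra work your sketch does not address.
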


Here, we say that $z_{0} ( h )$ is simple when there is a unique $\alpha \in \N^{n}$ such that
$z_{0} ( h ) = E_{0} - i h \sum_{j} \lambda_{j} ( \alpha_{j} + 1 / 2 )$.
Note that the asymptotic expansion of $z_{\infty} ( h )$ is only known through an implicit relation (see \eqref{a62}).

As a byproduct of the proof of Theorem \ref{a37}, we obtain the asymptotic of the resonant states. Recall that, for $z \in \CE_{\theta} \cap \res ( P )$ and $u \in H^{2} ( \R^{n} )$, we say that $u$ is a resonant state of $P$ associated to the resonance $z$ if and only if $( P_{\theta} - z ) u = 0$. We send back the reader to Section 7 of \cite{BoFuRaZe16_01} and the references given there for more details.

\begin{proposition}[Description of the resonant states]\sl \label{a66}
In the setting of Theorem \ref{a37}, we fix $\theta = h \vert \ln h \vert$. Let $u = u ( h )$ be a family of normalized resonant states associated to a resonance $z = z ( h ) \in B ( E_{0} , C h )$. Then, there exists $a \in S ( h^{- M} )$ with $M \in \R$ such that
\begin{equation*}
u ( x, h ) = a ( x , h ) e^{i \varphi_{+} ( x ) / h} \text{ microlocally near } ( 0 , 0 ) .
\end{equation*}
Moreover, the symbol $a$ satisfies near $0$ the transport equation
\begin{equation} \label{a67}
2 \nabla \varphi_{+} ( x ) \cdot \nabla a ( x , h ) + \Big( \Delta \varphi_{+} ( x ) - i \frac{z - E_{0}}{h} \Big) a ( x , h ) - i h \Delta a ( x , h ) \in S ( h^{\infty} ) .
\end{equation}
\end{proposition}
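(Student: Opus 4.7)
The plan is to use the annihilation-operator strategy sketched in the introduction to establish that $u$ is, microlocally near $(0,0)$, a Lagrangian distribution associated with $\Lambda_+$; the transport equation \eqref{a67} will then follow by direct substitution. Since $u$ is a resonant state, $( P_\theta - z ) u = 0$, and as the distortion is trivial on a compact neighborhood of the origin we have $( P - z ) u = \CO ( h^\infty )$ microlocally near $( 0 , 0 )$. Moreover, as is standard for distorted resonant states (see Section 7 of \cite{BoFuRaZe16_01}), $u$ is microlocally outgoing at infinity, so it satisfies the Cauchy problem \eqref{a70} near the trapped set with trivial incoming data and trivial right-hand side.

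I would then adapt the construction of the introduction to dimension $n$ by building pseudodifferential operators $A_1 , \ldots , A_n$, defined microlocally near $( 0 , 0 )$, whose principal symbols are $\xi_j - \partial_{x_j} \varphi_+ ( x )$, so that $\Lambda_+ = \Lambda_{\varphi_+}$ is their joint characteristic manifold. A normal form reduction at the hyperbolic fixed point (modeled on the one-dimensional inverted harmonic oscillator treated in the introduction) yields the commutation relation
\begin{equation*}
A_j ( P - z ) = ( P - z - i h \lambda_j ) A_j + R_j ,
\end{equation*}
with $R_j$ a remainder controllable through the iteration. Writing $A^\beta = A_1^{\beta_1} \cdots A_n^{\beta_n}$ and iterating, one obtains $( P - z_\beta ) A^\beta u = \CO ( h^\infty )$ microlocally near $( 0 , 0 )$, with $z_\beta = z + i h \sum_j \lambda_j \beta_j$. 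The outgoing property of $u$ being preserved under the $A_j$'s, each $A^\beta u$ solves \eqref{a70} at the shifted energy $z_\beta$ with trivial data. For $\vert \beta \vert$ large enough, $z_\beta$ lies at distance at least of order $h$ from $\res_0 ( P )$, so the polynomial resolvent estimate of Theorem \ref{a37} and the uniqueness for \eqref{a70} (Theorem \ref{a1}) together force $A^\beta u = \CO ( h^\infty )$ microlocally near $( 0 , 0 )$.

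Since $u$ is therefore annihilated microlocally by $A^\beta$ for all sufficiently large $\vert \beta \vert$, the Hassell--Melrose--Vasy-type argument alluded to in the introduction identifies $u$, microlocally near $( 0 , 0 )$, as a Lagrangian distribution associated with $\Lambda_+$. Hence there exists $a \in S ( h^{-M} )$, with $M$ controlled by the resolvent bound of Theorem \ref{a37}, such that $u = a e^{i \varphi_+ / h}$ microlocally. Plugging this ansatz into $( P - z ) u \in S ( h^\infty )$ and using the eikonal equation $\vert \nabla \varphi_+ \vert^2 + V = E_0$ (which holds since $\Lambda_+ \subset p^{- 1} ( E_0 )$), one obtains, after dividing by $- i h \, e^{i \varphi_+ / h}$, precisely the transport equation \eqref{a67}.

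The delicate point is the commutation relation above: in the smooth (non-analytic) setting the annihilation operators can only be constructed through a formal Birkhoff normal form valid modulo $C^\infty$-flat errors, and one must verify that the cumulative effect of the remainders $R_j$ remains negligible through all $\vert \beta \vert$ iterations, up to the threshold at which the polynomial resolvent estimate of Theorem \ref{a37} takes over. This is essentially the content of the proof of Theorem \ref{a37}, and Proposition \ref{a66} then follows as a byproduct of that argument.
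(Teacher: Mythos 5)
Your outline follows the same general strategy as the paper (annihilation operators $A_j$ with symbol $\xi_j - \partial_{x_j}\varphi_+$, iterated commutation, killing $A^\beta u$ for large $|\beta|$, hence $u$ is Lagrangian on $\Lambda_+$), and the paper indeed reads Proposition~\ref{a66} off from Lemma~\ref{a28} and \eqref{a31} in the proof of Theorem~\ref{a1}. But the crucial step in your sketch is not justified: from $A_j(P-z) = (P - z - ih\lambda_j)A_j + R_j$ you assert that iterating gives $(P - z_\beta) A^\beta u = \CO(h^\infty)$ microlocally near $(0,0)$. This is false as stated, because the commutators carry same-order remainders. As the paper's Lemma~\ref{a10} makes explicit, one has
\begin{equation*}
A^\alpha Q = (Q - ih\alpha(\lambda))A^\alpha + \sum_{|\beta|=|\alpha|} h R_{\alpha,\beta} A^\beta + h M + \sum_{|\gamma|<|\alpha|}\Psi\big(h^{|\alpha|-|\gamma|+1}|\ln h|\big)A^\gamma ,
\end{equation*}
where the $R_{\alpha,\beta}$ couple the $A^\beta u$'s at the same order $|\beta|=|\alpha|$. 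Consequently $A^\beta u$ does not solve the homogeneous microlocal Cauchy problem at energy $z_\beta$, and Theorem~\ref{a1} (which is stated for $(P-z)u=0$, with no inhomogeneous right-hand side) cannot be invoked as you do.

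The technical work that replaces this missing step is precisely what you push off to the end as ``the delicate point'' (and which you attribute to the proof of Theorem~\ref{a37}, when it actually lives in Section~\ref{s3}, the proof of Theorem~\ref{a1}). The paper conjugates by the exponential weight $\Op(e^{-t|\ln h| g})$ with the escape function $g$ supported in the outgoing region (equations \eqref{a16}--\eqref{a5} and \eqref{a21}), bundles the shifted operators into the $n^d\times n^d$ matrix $Q^d$ defined in \eqref{a12}, proves the resolvent bound $\|(Q^d - z)^{-1}\|\lesssim h^{-1}$ for $d$ large (Proposition~\ref{a11}, using a second escape function $\widetilde g$ and a G{\aa}rding-type lower bound on $-\im\widetilde Q$), and then runs the cascade estimate of Lemma~\ref{a24} to conclude $A^\alpha w = \CO(h^{|\alpha|-d_0-1})$ for all $\alpha$, which is what yields Lemma~\ref{a28}. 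None of this appears in your proposal, so while your plan is morally aligned with the paper's, it currently has a genuine gap at its center. Your derivation of the transport equation \eqref{a67} from the Lagrangian ansatz and the eikonal equation is correct, once the ansatz is established.
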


One can also show that $a$ is not $\CO ( h^{\infty} )$ near $0$. For simple pseudo-resonances, the transport equation \eqref{a67} can be used to prove that $a$ has an asymptotic expansion in powers of $h$ whose leading term has an explicit behavior at $0$. Proposition \ref{a66} is a consequence of Lemma \ref{a28} and \eqref{a31} in the Schr\"{o}dinger case (see \eqref{a59}). For globally analytic potentials and simple pseudo-resonances, the generalized spectral projection (and then the resonant states) has already been described (see Theorem 4.1 of \cite{BoFuRaZe11_01}).

The proof of the above results can be found in Section \ref{s4}. It mainly rests on the propagation of singularities at the hyperbolic fixed point given in the next part and on the construction of test functions made in \cite{BoFuRaZe11_01}.

\Subsection{Propagation of singularities at barrier-top} \label{s6}

In this part, we give a result on the propagation of semiclassical singularities through a hyperbolic fixed point. We generalize slightly the setting of Section \ref{s5} and work microlocally near $( 0 , 0 )$. More precisely,
\begin{hyp}\label{h4}
Let $P = \Op ( p )$ on $L^{2} ( \R^{n} )$ with
\begin{equation*}
p ( x , \xi , h ) = p_{0} ( x , \xi ) + h p_{1} ( x , \xi , h ) ,
\end{equation*}
$p , p_{0} , p_{1} \in S ( 1 )$ and $p_{0}$ real-valued.
\end{hyp}
As in \ref{h2}, we assume that
\begin{hyp}\label{h5}
Up to a symplectic change of variables, we have
\begin{equation*}
p_{0} ( x , \xi ) = \xi^{2} - \sum_{j = 1}^{n} \frac{\lambda_{j}^{2}}{4} x_{j}^{2} + \CO \big( ( x , \xi )^{3} \big) ,
\end{equation*}
in a neighborhood of $( 0 , 0 )$, with $0 < \lambda_{1} \leq \cdots \leq \lambda_{n}$.
\end{hyp}
In that case, the exceptional set $\Gamma_{0} ( h )$ is defined by
\begin{equation}
\Gamma_{0} ( h ) = \bigg\{ - i h \sum_{j = 1}^{n} \lambda_{j} \Big( \frac{1}{2} + \alpha_{j} \Big) + h p_{1} ( 0 , 0 , h ) ; \ \alpha \in \N^{n} \bigg\} ,
\end{equation}

\begin{figure}%[!h]
\begin{center}
\input{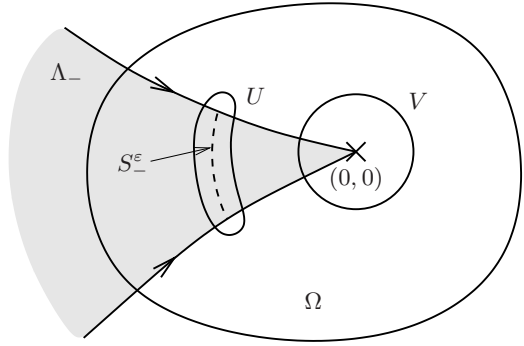}
\end{center}
\caption{The geometric setting of Theorem \ref{a1}.} \label{f3}
\end{figure}

\begin{theorem}[Propagation of singularities]\sl \label{a1}
Assume \ref{h4} and \ref{h5}. Let $\Omega \subset T^{*} \R^{n}$ be a neighborhood of $( 0 ,0 )$ and $S^{\varepsilon}_{-} = \{ ( x, \xi ) \in \Lambda_{-} ; \ \vert x \vert = \varepsilon \} \subset \Omega$, with $\varepsilon > 0$ sufficiently small. Let also $C , \delta > 0$ and $U$ be a neighborhood of $S^{\varepsilon}_{-}$. There exists a neighborhood $V$ of $( 0 , 0 )$ such that, for all $u \in L^{2} ( \R^{n} )$ satisfying $\Vert u \Vert_{L^{2}} \leq 1$ and
\begin{equation} \label{a69}
\left\{ \begin{aligned}
&( P - z ) u = 0 &&\text{ microlocally near } \Omega , \\
&u = 0 &&\text{ microlocally near } U ,
\end{aligned} \right.
\end{equation}
with $z \in B ( 0 , C h )$ and $\dist ( z , \Gamma_{0} ( h ) ) \geq \delta h$, then $u = 0$ microlocally near $V$.
\end{theorem}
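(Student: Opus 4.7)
The plan is to follow the annihilation-operator strategy outlined in the introduction, adapted from the one-dimensional toy model to the present $n$-dimensional hyperbolic setting. First I would bring $P$ to a convenient microlocal normal form near $(0,0)$: a symplectic change of coordinates straightens $\Lambda_\pm$ into coordinate subspaces, and a smooth Birkhoff-type normal form followed by conjugation by a microlocally unitary Fourier integral operator reduces $P$, modulo $\CO(h^\infty)$, to the Weyl quantization of a function of the commuting quadratic forms $\lambda_j x_j \xi_j$, shifted by the subprincipal value $h p_1(0,0,h)$. In these coordinates I introduce annihilation-type operators $A_j$, namely the microlocal analogues of $-i h \partial_{x_j} - \lambda_j x_j / 2$ in the quadratic model, whose joint characteristic set meets the real cotangent only at $(0,0)$ and which satisfy
\begin{equation*}
A_j (P - z) = \big( P - z - i h \lambda_j \big) A_j + \CO(h^\infty)
\end{equation*}
microlocally near $\Omega$.

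Iterating this identity and using $(P-z) u = 0$ microlocally near $\Omega$, one obtains, for every multi-index $\alpha \in \N^n$,
\begin{equation*}
\big( P - z - i h \lambda \cdot \alpha \big) A^\alpha u = \CO ( h^\infty ) \quad \text{microlocally near } \Omega .
\end{equation*}
For $|\alpha|$ larger than a constant depending only on $C$ and $\lambda_1$, the principal symbol of $P - z - i h \lambda \cdot \alpha$ has modulus bounded below by a fixed positive multiple of $h$ on $\Omega$, by $z \in B(0, C h)$ and the separation hypothesis $\dist(z, \Gamma_0(h)) \geq \delta h$. A standard semiclassical parametrix construction --- inverting an operator whose ellipticity is of order $h$ --- then upgrades the above identity into $A^\alpha u = \CO(h^\infty)$ microlocally near $\Omega$ for every such $\alpha$.

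These bounds, combined with the partial ellipticity of the $A_j$'s off $(0,0)$ and the incoming vanishing hypothesis $u = 0$ near $S^\varepsilon_- \subset \Lambda_-$ (which rules out any parasitic Lagrangian contribution along $\Lambda_-$), force $u$ to be, microlocally near $(0,0)$, a finite-order Lagrangian distribution associated to $\Lambda_+$: back in the original coordinates, $u = a(x,h) e^{i \varphi_+(x)/h}$ modulo $\CO(h^\infty)$ with $a \in S(h^{-M})$ for some $M$. Inserting this ansatz into $(P-z) u = 0$ yields a transport equation on $a$ of the type \eqref{a67}. Expanding $a$ in Taylor series at $0$, the equation on the coefficient at $x^\beta$ reduces, at leading order, to multiplication by a quantity proportional to $(z - h p_1(0,0,h))/h + i \lambda \cdot (\beta + 1/2)$, which by the separation hypothesis is uniformly bounded below in modulus by $\delta$. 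Solving the recursion order by order forces $a$ to be flat at $0$; propagating this flatness along the outgoing Hamiltonian flow on $\Lambda_+$ (whose trajectories accumulate to $(0,0)$ in negative time) then yields $a \equiv 0$ microlocally near $0$, hence $u = 0$ microlocally near some neighborhood $V$ of $(0,0)$.

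The two main obstacles are: first, producing annihilation operators with the advertised $\CO(h^\infty)$ commutation relation --- a smooth quantum Birkhoff normal form at the hyperbolic fixed point, simplified here by the fact that only the diagonal of the normal form is needed, so no non-resonance condition on the $\lambda_j$'s has to be imposed; and second, justifying rigorously the deduction of the Lagrangian structure of $u$ on $\Lambda_+$ from $A^\alpha u = \CO(h^\infty)$ for large $|\alpha|$, which requires introducing an appropriate class of ``finite-order Lagrangian'' distributions associated to $\Lambda_+$ (as mentioned at the end of the introduction) and using the incoming vanishing data to exclude any $\Lambda_-$ contribution. Once this structural framework is in place, the semiclassical parametrix for the shifted operator and the Taylor-series analysis of the transport equation are essentially standard.
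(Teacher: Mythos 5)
Your outline (annihilation operators $\to$ Lagrangian structure on $\Lambda_+$ $\to$ transport equation $\to$ flatness at $0$ $\to$ propagation) is the right one, and the last two stages match the paper. But the middle of your argument has a gap that is exactly where the work of the paper happens, and a side-assumption that the paper deliberately avoids. On the side-assumption: the paper does \emph{not} pass through a smooth quantum Birkhoff normal form reducing $P$ to a function of the $\lambda_j x_j\xi_j$ modulo $\CO(h^\infty)$. Such a normal form is not available for arbitrary $\lambda_j$ (resonances among the $\lambda_j$ obstruct it), and the paper precisely wants no non-resonance hypothesis. Instead it defines $A_j = \Op\bigl(\bigl(\xi_j - \partial_{x_j}\varphi_+\bigr)\psi_3\bigr)$ directly from the generating function of $\Lambda_+$. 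The price is that the commutation relation is not $A_j(P-z)=(P-z-ih\lambda_j)A_j+\CO(h^\infty)$ as you write; Lemma \ref{a10} has extra $\CO(h)$ error terms $R_{\alpha,\beta}$ (with symbol vanishing at $(0,0)$) and $M$ (supported on $\supp\nabla\psi_3$), and these force one to work with the full $n^d\times n^d$ matrix operator $Q^d$ rather than the scalar shifted operator.

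The genuine gap is the claim that a ``standard semiclassical parametrix'' upgrades $(P-z-ih\lambda\cdot\alpha)A^\alpha u = \CO(h^\infty)$ near $\Omega$ to $A^\alpha u = \CO(h^\infty)$ near $\Omega$. The operator is only $h$-elliptic (its symbol is $\gtrsim h$ because the imaginary part is $\gtrsim h$, while the real part $p_0-\re z$ vanishes on the characteristic set), and the equation only holds microlocally on the open set $\Omega$, not globally. Inverting then unavoidably produces commutator terms $[P,\chi]A^\alpha u$ supported on the annulus $\Omega_0\setminus\Omega_1$, which the $h$-resolvent turns into $\CO(h^{-1})$ contributions that propagate into the interior. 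Handling these terms is a propagation problem, not an ellipticity problem, and the incoming hypothesis must be used \emph{at this stage}, not afterward as a bookkeeping device to exclude ``$\Lambda_-$ contributions.'' Concretely, the annulus splits into $\SA_-$ (killed directly by $u=0$ on $U$), $\SA_0$ (killed by ordinary propagation from $\SA_-$), and $\SA_+$ (neither), and to make $\SA_+$ harmless the paper conjugates by $\Op(e^{-t|\ln h|g})$ with an escape function $g$ strictly positive on $\SA_+$ and adds an absorbing term $-i\sqrt{h}\Op(1-\psi_1)$ outside $\Omega_0$. This produces $(Q-z)v=\CO(h^{ct})$ and the matrix resolvent bound of Proposition \ref{a11}, from which the iterated estimates $\|A^\alpha v\|=\CO(h^{|\alpha|-d_0}|\ln h|^{|\alpha|-d_0})$ of Lemma \ref{a24} follow by a finite recursion (up to $|\alpha|\leq ct+d_0-1$, with $t$ then sent to infinity on the $t$-independent function $w$). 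Without this escape-function/absorption machinery the passage from the commutation identity to decay of $A^\alpha u$ does not go through, and the proposal as written has no substitute for it.
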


In the previous result, $u = u ( h )$ and $z = z ( h )$ do not need to be defined for all positive $h$ small enough, but only on a sequence of positive $h$'s converging to $0$. The different sets appearing in Theorem \ref{a1} are illustrated in Figure \ref{f3}.

We have already obtained a similar result in a previous paper. More precisely, Theorem 2.1 of \cite{BoFuRaZe07_01} shows the uniqueness of the microlocal Cauchy problem \eqref{a69} when the spectral parameter $z \in B ( 0 , C h )$ satisfies $\dist ( z , \Gamma ( h ) ) > h^{N}$ for any $N > 0$. The discrete set $\Gamma ( h )$ was not explicitly given but verifies some properties. Thus, the present Theorem \ref{a1} says that, roughly speaking, $\Gamma ( h ) = \Gamma_{0} ( h )$ modulo $o ( h )$. In particular, $\Gamma ( h )$ can be replaced by $\Gamma_{0} ( h )$ in all the statements of \cite{BoFuRaZe16_01}. We assume here that $z$ is at distance $\delta h$ from $\Gamma_{0} ( h )$ and not $h^{N}$ since $\Gamma_{0} ( h )$ is an approximation modulo $o ( h )$ of the true exceptional set. One could probably replace $\delta h$ by $h^{1 + \nu}$ with $0 < \nu \ll 1$ (and $h^{N}$ for any $N > 0$ in a framework similar to Proposition \ref{a58}).

A self-contained demonstration of Theorem \ref{a1}, summarized at the end of the introduction, is given in Section \ref{s3}. Using annihilation operators, it does not follow the one of Theorem 2.1 of \cite{BoFuRaZe07_01}. We could also have proved this result from Theorem \ref{a37}.

\section{Proof of the propagation of singularities} \label{s3}

\begin{figure}%[!h]
\begin{center}
\input{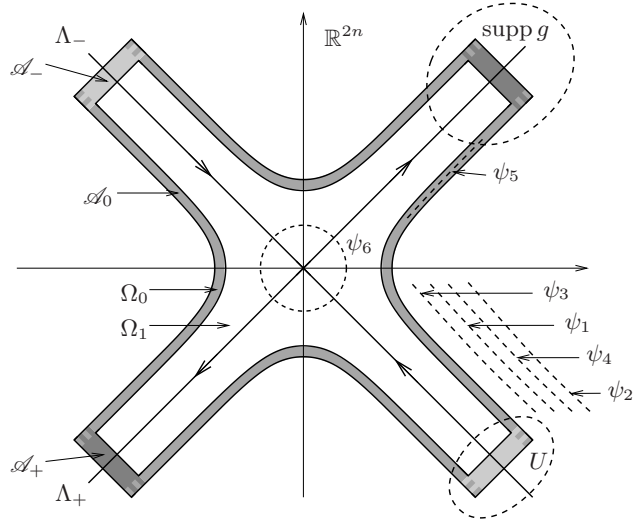}
\end{center}
\caption{The geometric setting and the different functions of Section \ref{s3}.} \label{f1}
\end{figure}

The beginning of the proof is similar to \cite[Section 3]{BoFuRaZe07_01}. Let $\Omega$ be a neighborhood of $(0,0)$. For $\varepsilon > 0$ small enough and $U$ a neighborhood of $S_{-}^{\varepsilon}$, one can find $\Omega_{1} \subset \Omega_{0} \subset \Omega$ close to $( 0 , 0 )$ as in Figure \ref{f1} with
\begin{equation*}
\Omega_{0} \setminus \Omega_{1} = \SA_{-} \cup \SA_{0} \cup \SA_{+} ,
\end{equation*}
satisfying the following properties. First $\Omega_{0} , \SA_{-} , \SA_{0} , \SA_{+}$ are open sets and $\Omega_{1}$ is a closed set. Then, $\SA_{-} \subset U$ and $\SA_{0}$ is geometrically controlled by $\SA_{-}$. It means that any point $\rho \in \SA_{0}$ can be written as $\exp ( t H_{p_{0}} ) ( \rho_{-} )$ for some $\rho_{-} \in \SA_{-}$ and some $t \geq 0$ with $\exp ( s H_{p_{0}} ) ( \rho_{-} ) \in \Omega_{0}$ for all $s \in [ 0 , t ]$. Eventually, $\SA_{+}$ is close to $\Lambda_{+}$. Note that the $\SA_{\bullet}$ are not disjoint. The existence of such a framework is guaranteed by the Hartman--Grobman theorem (see \cite[Page 120]{Pe01_01}).

Let $\psi_{1} , \psi_{2} \in C^{\infty}_{0} ( \R^{2 n} ; [ 0 , 1 ] )$ be supported near $\Omega_{0}$ and satisfy $\one_{\Omega_{0}} \prec \psi_{1} \prec \psi_{2}$. Here, $f \prec g$ means that $g = 1$ near the support of $f$. As explained in \cite[Lemma 2.1]{HeSj85_01} or \cite[(12.104)--(12.105)]{BoFuRaZe16_01}, there exists a local symplectic diffeomorphism $( x , \xi ) \mapsto ( y , \eta )$ such that, in these new variables, the principal symbol $p_{0}$ can be written
\begin{equation*}
p_{0} ( y , \eta ) = \SB ( y , \eta ) y \cdot \eta ,
\end{equation*}
near $( 0 , 0 )$. Here, $\SB$ is an $n \times n$ smooth matrix-valued function with $\SB ( 0 , 0 ) = \diag ( \lambda_{j} )$. In particular, the outgoing manifold writes $\Lambda_{+} = \{ ( y , 0 ) ; \ y \in \R^{n} \}$. We can suppose that $\SA_{+}$ is sufficiently close to $\Lambda_{+}$, so that the set $\pi_{y} ( \SA_{+} ) = \{ y ; \ ( y , \eta ) \in \SA_{+} \text{ for some } \eta \}$ avoids a neighborhood of $0$. We then consider $\chi_{2} \in C^{\infty}_{0} ( \R ; [ 0 , 1 ] )$ such that $\chi_{2} = 0$ near $0$, $\chi_{2} ( y^{2} ) = 1$ for all $y \in \pi_{y} ( \SA_{+} )$ and $\chi_{2}^{\prime} ( y^{2} ) \geq 0$ for all $y \in \pi_{y} ( \supp \psi_{2} )$. We define
\begin{equation} \label{a16}
g ( x , \xi ) = y^{2} \chi_{2} ( y^{2} ) \psi_{2} ( x , \xi ) \in C^{\infty}_{0} ( \R^{2 n} ) .
\end{equation}
Roughly speaking, $g$ is an escape function in the outgoing region (see \eqref{a2}). Compared to the proofs trying to make the operator elliptic (see e.g. \cite[Section 4]{BoFuRaZe07_01}), this function plays a slightly different role here (see \eqref{a19}). In particular, we do not need an escape function in the incoming region, nor near $0$. This explains why we can take $g = 0$ in these regions.

By construction, there exists a constant $c > 0$ such that
\begin{equation} \label{a26}
\forall \rho \in \SA_{+} , \qquad g ( \rho ) \geq c .
\end{equation}
Moreover, a direct computation gives
\begin{align*}
\{ p_{0}, g \} &= \{ \SB y \cdot \eta , y^{2} \chi_{2} ( y^{2} ) \}   \\
&= \big( \SB y + ( \partial_{\eta} \SB ) y \cdot \eta \big) \cdot \big( 2 y \chi_{2} ( y^{2} ) + 2 y y^{2} \chi_{2}^{\prime} ( y^{2} ) \big)  \\
&= \chi_{2} ( y^{2} ) \big( 2 \SB y \cdot y + \CO ( \eta y^{2} ) \big) + y^{2} \chi_{2}^{\prime} ( y^{2} ) \big( 2 \SB y \cdot y + \CO ( \eta y^{2} ) \big) ,
\end{align*}
on the support of $\psi_{1}$. Recall that $\chi_{2}^{\prime} ( y^{2} ) \geq 0$ for $( y , \eta )$ on this set. Thus, if the previous objects have been constructed with support sufficiently close to $( 0 , 0 )$, we have
\begin{equation} \label{a2}
\{ p_{0}, g \} \geq 0 ,
\end{equation}
on the support of $\psi_{1}$.

For all $t > 0$ fixed, there exists $C > 0$ such that $e^{\pm t \vert \ln h \vert g} \in S ( h^{- C} )$. From the semiclassical pseudodifferential calculus, we get
\begin{align*}
\Op ( e^{- t \vert \ln h \vert g} ) \Op ( e^{t \vert \ln h \vert g} ) &= \Op \Big( 1 + \frac{h}{2 i} t^{2} \vert \ln h \vert^{2} \{ - g , g \} + S \big( h^{2} \vert \ln h \vert^{4} \big) \Big)   \\
&= 1 + \Psi \big( h^{2} \vert \ln h \vert^{4} \big) .
\end{align*}
Using a similar equation for $\Op ( e^{t \vert \ln h \vert g} ) \Op ( e^{- t \vert \ln h \vert g} )$ and the Beals lemma (see \cite[Section 8]{DiSj99_01}), we deduce that $\Op ( e^{- t \vert \ln h \vert g} )$ is invertible for $h$ small enough and
\begin{equation} \label{a3}
\Op ( e^{- t \vert \ln h \vert g} )^{- 1} = \Op ( e^{t \vert \ln h \vert g} ) \big( 1 + \Psi ( h^{3 / 2} ) \big) .
\end{equation}

We define the operator
\begin{equation} \label{a4}
Q = \Op ( e^{- t \vert \ln h \vert g} ) P \Op ( e^{- t \vert \ln h \vert g} )^{- 1} - i \sqrt{h} \Op ( 1 - \psi_{1} ) .
\end{equation}
From the previous discussion, it is well-defined and bounded for $h$ small enough. As in \cite[(4.12)--(4.17)]{BoFuRaZe07_01}, the symbolic calculus in $S ( 1 )$ gives
\begin{equation*}
\Op ( e^{- t \vert \ln h \vert g} ) P \Op ( e^{t \vert \ln h \vert g} ) = P - i t h \vert \ln h \vert \Op ( \{ p_{0} , g \} ) + \Psi ( h^{3 / 2} ) .
\end{equation*}
Then, \eqref{a3} implies
\begin{equation} \label{a5}
Q = P - i t h \vert \ln h \vert \Op ( \{ p_{0} , g \} ) - i \sqrt{h} \Op ( 1 - \psi_{1} ) + \Psi ( h^{3 / 2} ) .
\end{equation}

For $j = 1 , \ldots , n$, we define the annihilation operator
\begin{equation} \label{a6}
A_{j} = \Op ( a_{j} ) = \Op \big( \big( \xi_{j} - \partial_{x_{j}} \varphi_{+} ( x ) \big) \psi_{3} ( x , \xi ) \big) ,
\end{equation}
where $\psi_{3} \in C^{\infty}_{0} ( \R^{2 n} ; [ 0 , 1 ] )$ satisfies $\one_{\Omega_{0}} \prec \psi_{3} \prec \psi_{1}$.

\begin{lemma}\sl \label{a10}
For any finite sequence $\alpha = ( \alpha_{1} , \ldots , \alpha_{\vert \alpha \vert} )$ of elements in $\{ 1 , \ldots , n \}$ and of length $\vert \alpha \vert$, we have
\begin{equation*}
A^{\alpha} Q = ( Q - i h \alpha ( \lambda ) ) A^{\alpha} + \sum_{\vert \beta \vert = \vert \alpha \vert} h R_{\alpha , \beta} A^{\beta} + h M + \sum_{\vert \gamma \vert < \vert \alpha \vert} \Psi \big( h^{\vert \alpha \vert - \vert \gamma \vert + 1} \vert \ln h \vert \big) A^{\gamma} ,
\end{equation*}
where $A^{\alpha} = A_{\alpha_{1}} \cdots A_{\alpha_{\vert \alpha \vert}}$ and $\alpha ( \lambda ) = \lambda_{\alpha_{1}} + \cdots + \lambda_{\alpha_{\vert \alpha \vert}}$. Eventually, $R_{\alpha , \beta}$ (resp. $M$) is an operator in $\Psi ( 1 )$ whose symbol vanishes at $( 0 , 0 )$ (resp. is supported inside the support of $\nabla \psi_{3} ( x , \xi )$).
\end{lemma}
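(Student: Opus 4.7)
The plan is to prove Lemma \ref{a10} by induction on $\vert\alpha\vert$, the base case coming from a direct commutator computation and the inductive step from right-multiplication by $A_j$ together with commuting $A^\alpha$ past remainder symbols.

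For the base case $\vert\alpha\vert=1$, I would expand $[A_j,Q]$ using \eqref{a5}. The only substantial contribution comes from $[A_j,P]$, which the Moyal expansion gives as $-\frac{h}{i}\Op(\{p_0,a_j\})+\Psi(h^2)$. The computation of $\{p_0,a_j\}$ for $a_j=(\xi_j-\partial_{x_j}\varphi_+)\psi_3$ is most transparent in the linearizing symplectic coordinates of \cite[Lemma~2.1]{HeSj85_01} already invoked in the text, where $p_0=\SB(y,\eta)\,y\cdot\eta$ with $\SB(0,0)=\diag(\lambda_j)$ and $\Lambda_+=\{\eta=0\}$, so that $a_j$ is proportional to $\eta_j\psi_3$. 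A direct calculation gives $\{p_0,\eta_j\}=-\lambda_j\eta_j-\sum_l c_{jl}(y,\eta)\eta_l$ with $c_{jl}(0,0)=0$, whence
\begin{equation*}
\{p_0,a_j\}=-\lambda_j a_j+\sum_k r_{jk}\,a_k+m_j,
\end{equation*}
with $r_{jk}$ vanishing at $(0,0)$ and $m_j$ supported in $\supp(\nabla\psi_3)$ (from the derivatives of $\psi_3$). The three remaining pieces of $Q$ from \eqref{a5} only contribute harmless errors: $[A_j,-ith\vert\ln h\vert\Op(\{p_0,g\})]\in\Psi(h^2\vert\ln h\vert)$, which is allowed at $\vert\gamma\vert=0$; the bracket $[A_j,-i\sqrt h\Op(1-\psi_1)]$ is $\Psi(h^\infty)$ because $\psi_3\prec\psi_1$ forces $\dist(\supp a_j,\supp(1-\psi_1))>0$; and $[A_j,\Psi(h^{3/2})]\in\Psi(h^{5/2})\subset\Psi(h^2\vert\ln h\vert)$. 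Assembling these terms yields the lemma for $\vert\alpha\vert=1$.

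For the inductive step, assume the identity for some $\alpha$ of length $n$ and set $\alpha'=(\alpha,j)$. Writing $A^{\alpha'}Q=A^{\alpha}(A_j Q)$, I would insert the $\vert\alpha\vert=1$ identity into $A_j Q$ and then apply the inductive hypothesis to $A^\alpha Q$ (equivalently, right-multiply the identity for $A^\alpha Q$ by $A_j$). The scalars combine to give $-ih(\alpha(\lambda)+\lambda_j)A^{\alpha'}=-ih\alpha'(\lambda)A^{\alpha'}$. The term $\sum_\beta h R_{\alpha,\beta}A^\beta\!\cdot\! A_j$ becomes $\sum_{\beta'}hR_{\alpha',\beta'}A^{\beta'}$ with $\vert\beta'\vert=n+1$; the term $hM_\alpha A_j$ remains supported in $\supp(\nabla\psi_3)$, since $\supp(\nabla\psi_3)\subset\supp\psi_3\supset\supp a_j$; and each $\Psi(h^{n-\vert\gamma\vert+1}\vert\ln h\vert)A^\gamma\cdot A_j$ reads as $\Psi(h^{\vert\alpha'\vert-\vert\gamma'\vert+1}\vert\ln h\vert)A^{\gamma'}$ with $\gamma'=(\gamma,j)$. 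The only nontrivial commutation is $A^\alpha\cdot hR_{j,k}A_k=hR_{j,k}A^{(\alpha,k)}+h[A^\alpha,R_{j,k}]A_k$: the first piece fits the $hR_{\alpha',\beta'}A^{\beta'}$ slot with $\beta'=(\alpha,k)$, and the commutator expanded as $\sum_i A_{\alpha_1}\cdots[A_{\alpha_i},R_{j,k}]\cdots A_{\alpha_n}$ has each $[A_{\alpha_i},R_{j,k}]\in\Psi(h)$, which after further reordering lands in the lower-length $\Psi$ error sum at the correct power of $h$.

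The main difficulty is the bookkeeping: one must verify at each inductive step that every remainder falls in exactly one of the three allowed categories, that is, a symbol vanishing at $(0,0)$ paired with an $A^\beta$ of full length $\vert\alpha'\vert$, or a symbol supported in $\supp(\nabla\psi_3)$ absorbed into $hM$, or an operator of order $h^{\vert\alpha'\vert-\vert\gamma'\vert+1}\vert\ln h\vert$ attached to a shorter $A^{\gamma'}$. The recurring mechanism is that each commutator of an $A_\bullet$ with a bounded symbol costs one power of $h$ and removes one factor of $A$, so the balance $h^{\vert\alpha'\vert-\vert\gamma'\vert+1}$ is preserved exactly, up to the $\vert\ln h\vert$ factors inherited from the $g$-conjugation in the definition of $Q$.
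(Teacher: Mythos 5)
Your proposal is correct in substance and reaches the same formula, but the key computation in the base case follows a genuinely different route from the paper's. The paper factors $p_0$ in the \emph{original} variables as $p_0 = B(\xi - \nabla\varphi_-)\cdot(\xi - \nabla\varphi_+)$ via the (non-symplectic) change of coordinates $(y,\eta) = (\xi - \nabla\varphi_-(x),\,\xi - \nabla\varphi_+(x))$, so that $a_j = \eta_j\psi_3$ exactly, and then computes $\{p_0, a_j\}$ by bilinearity using $\{\xi_k - \partial_{x_k}\varphi_+, \xi_j - \partial_{x_j}\varphi_+\} = 0$, $\{\xi_\ell - \partial_{x_\ell}\varphi_-,\xi_j - \partial_{x_j}\varphi_+\} = -\lambda_j\delta_{j\ell} + r$ and $B(0,0) = Id$. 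You instead pass to the Helffer--Sj\"ostrand \emph{symplectic} normal form $p_0 = \SB y\cdot\eta$, which simplifies the Poisson bracket but makes $a_j$ no longer a coordinate function; the two approaches trade off these two sources of complexity. Beware of one imprecision in your version: in the symplectic coordinates $a_j$ is \emph{not} proportional to $\eta_j\psi_3$; rather $a_j = \sum_k c_{jk}(y,\eta)\eta_k\psi_3$ with $c_{jk}(0,0) = b_j^{-1}\delta_{jk}$ for some constants $b_j$ (equal to $1$ only for a particular normalization of the symplectic map), so the off-diagonal corrections $c_{jk}$, $k\neq j$, and the non-constancy of $c_{jj}$ must be unwound. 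The upshot is the same, $\{p_0, a_j\} = -\lambda_j a_j + \sum_k r_{jk}a_k + m_j$ with $r_{jk}(0,0) = 0$, because the constant $b_j$ cancels when converting $\eta_j\psi_3$ back to $a_j$ and the corrections all land in the $r$-slot, but this should be made explicit rather than hidden in a ``whence.''

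The inductive step matches the paper's iteration in spirit (the paper works it out for $|\alpha|=2$ and then says ``iterating''). Your bookkeeping heuristic — ``each commutator of an $A_\bullet$ with a bounded symbol costs one power of $h$ and removes one factor of $A$'' — is the right principle for pushing remainder symbols left into the $\Psi$-slot, and the terms $\Psi(h^2)A_k$, $A^\alpha\Psi(h^2|\ln h|) = \Psi(h^2|\ln h|)A^\alpha + \Psi(h^3|\ln h|)$, etc., all land at the correct order $h^{|\alpha'|-|\gamma'|+1}|\ln h|$. The absorption of $h A^\alpha M_j$ into $hM$ is valid because composition in $\Psi(1)$ preserves essential support of the symbol to any finite order. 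So there is no genuine gap here; the ``further reordering'' is routine but should not be entirely waved away, as it is exactly where the $|\gamma| < |\alpha|$ structure of the error sum is checked.
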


\begin{proof}
From \eqref{a27}, the change of variables
\begin{equation*}
( x , \xi ) \longmapsto \big( y = \xi - \nabla \varphi_{-} ( x ) , \eta = \xi - \nabla \varphi_{+} ( x ) \big) ,
\end{equation*}
is a local diffeomorphism near $( 0 , 0 )$ (not necessarily symplectic). Since $p_{0}$ vanishes on $\Lambda_{\pm}$, we have $p_{0} ( y , 0 ) = p_{0} ( 0 , \eta ) = 0$ for all $y , \eta$ in this new system of coordinates. In particular, $\partial_{y} p_{0} ( y , 0 ) = \partial_{\eta} p_{0} ( 0 , \eta ) = 0$. Then, applying two times the Taylor formula yields $p_{0} ( y , \eta ) = B ( y , \eta ) y \cdot \eta$ for some $n \times n$ smooth matrix-valued function $B$. Coming back to the original variables, we have
\begin{equation} \label{a7}
p_{0} ( x , \xi ) = B ( x , \xi ) \big( \xi - \nabla \varphi_{-} ( x ) \big) \cdot \big( \xi - \nabla \varphi_{+} ( x ) \big) .
\end{equation}
Eventually, \ref{h5} and \eqref{a27} imply $B ( 0 , 0 ) = I d$.

Combining \eqref{a5}, \eqref{a6} and $\psi_{3} \prec \psi_{1}$, we deduce
\begin{align}
A_{j} Q &= Q A_{j} + [ A_{j} , Q ] \nonumber    \\
&= Q A_{j} + [ A_{j} , P ] - i t h \vert \ln h \vert \big[ A_{j} , \Op ( \{ p_{0} , g \} ) \big] + \Psi ( h^{5 / 2} ) \nonumber  \\
&= Q A_{j} + \big[ \Op \big( ( \xi_{j} - \partial_{x_{j}} \varphi_{+} ) \psi_{3} \big) , \Op ( p_{0} ) \big] + \Psi \big( h^{2} \vert \ln h \vert \big) \nonumber   \\
&= Q A_{j} + i h \Op \big( \big\{ p_{0} , ( \xi_{j} - \partial_{x_{j}} \varphi_{+} ) \psi_{3} \big\} \big) + \Psi \big( h^{2} \vert \ln h \vert \big) .  \label{a8}
\end{align}
Moreover, \eqref{a7} and a direct computation show that
\begin{align*}
\big\{ p_{0} , ( \xi_{j} - \partial_{x_{j}} \varphi_{+} ) \psi_{3} \big\} ={}& \big\{ B ( \xi - \nabla \varphi_{-} ) \cdot ( \xi - \nabla \varphi_{+} ) , ( \xi_{j} - \partial_{x_{j}} \varphi_{+} ) \psi_{3} \big\}   \\
={}& \big\{ B ( \xi - \nabla \varphi_{-} ) \cdot ( \xi - \nabla \varphi_{+} ) , \xi_{j} - \partial_{x_{j}} \varphi_{+} \big\} \psi_{3}  \\
&+ \big\{ B ( \xi - \nabla \varphi_{-} ) \cdot ( \xi - \nabla \varphi_{+} ) , \psi_{3} \big\} ( \xi_{j} - \partial_{x_{j}} \varphi_{+} )  \\
={}& \sum_{k} \big\{ B ( \xi - \nabla \varphi_{-} ) , \xi_{j} - \partial_{x_{j}} \varphi_{+} \big\}_{k} ( \xi_{k} - \partial_{x_{k}} \varphi_{+} ) \psi_{3} + m  \\
={}& \sum_{k , \ell} B_{k , \ell} \big\{ \xi_{\ell} - \partial_{x_{\ell}} \varphi_{-} , \xi_{j} - \partial_{x_{j}} \varphi_{+} \big\} a_{k}  \\
&+ \sum_{k , \ell} \big\{ B_{k , \ell} , \xi_{j} - \partial_{x_{j}} \varphi_{+} \big\} ( \xi_{\ell} - \partial_{x_{\ell}} \varphi_{-} ) a_{k} + m  \\
={}& - \lambda_{j} a_{j} + \sum_{k} r a_{k} + m ,
\end{align*}
since $\{ \xi_{k} - \partial_{x_{k}} \varphi_{+} , \xi_{j} - \partial_{x_{j}} \varphi_{+} \} = \partial^{2}_{x_{j} , x_{k}} \varphi_{+} - \partial^{2}_{x_{k} , x_{j}} \varphi_{+} = 0$, $B_{k , \ell} = \delta_{k , \ell} + r$ and
\begin{equation*}
\big\{ \xi_{\ell} - \partial_{x_{\ell}} \varphi_{-} , \xi_{j} - \partial_{x_{j}} \varphi_{+} \big\} = - \lambda_{j} \delta_{j , \ell} + r ,
\end{equation*}
from \eqref{a27}. In the previous equations and in the following, $r ( x , \xi )$ (resp. $m ( x , \xi )$) denotes a symbol in $S ( 1 )$ with $r ( x , \xi ) = \CO ( x , \xi )$ (resp. supported inside the support of $\nabla \psi_{3} ( x , \xi )$) changing from occurrence to occurrence. Then, \eqref{a8} becomes
\begin{equation} \label{a9}
A_{j} Q = ( Q - i \lambda_{j} h ) A_{j} + \sum_{k} h R A_{k} + h M + \Psi \big( h^{2} \vert \ln h \vert \big) ,
\end{equation}
where $R = \Op ( r )$ and $M = \Op ( m )$. Applying two times this formula, we get
\begin{align*}
A_{j_{1}} A_{j_{2}} Q ={}& A_{j_{1}} ( Q - i \lambda_{j_{2}} h ) A_{j_{2}} + \sum_{k} h A_{j_{1}} R A_{k} + h A_{j_{1}} M + A_{j_{1}} \Psi \big( h^{2} \vert \ln h \vert \big)   \\
={}& \big( Q - i ( \lambda_{j_{1}} + \lambda_{j_{2}} ) h \big)  A_{j_{1}} A_{j_{2}} + \sum_{k} h R A_{k} A_{j_{2}} + h M A_{j_{2}} + \Psi \big( h^{2} \vert \ln h \vert \big) A_{j_{2}}  \\
&+ \sum_{k} h R A_{j_{1}} A_{k} + \sum_{k} \Psi ( h^{2} ) A_{k} + h M + \Psi \big( h^{2} \vert \ln h \vert \big) A_{j_{1}} + \Psi \big( h^{3} \vert \ln h \vert \big)   \\
={}& \big( Q - i ( \lambda_{j_{1}} + \lambda_{j_{2}} ) h \big) A_{j_{1}} A_{j_{2}} + \sum_{k_{1} , k_{2}} h R A_{k_{1}} A_{k_{2}}   \\
&+ h M + \sum_{k_{1}} \Psi \big( h^{2} \vert \ln h \vert \big) A_{k_{1}} + \Psi \big( h^{3} \vert \ln h \vert \big) .
\end{align*}
Iterating this process, we obtain the lemma.
\end{proof}

For $d \in \N^{*}$, let $Q^{d}$ be the $n^{d} \times n^{d}$ matrix of operators whose coefficient $( \alpha , \beta )$, with $\alpha , \beta$ of length $d$, given by
\begin{equation} \label{a12}
Q^{d}_{\alpha , \beta} = ( Q - i h \alpha ( \lambda ) ) \delta_{\alpha , \beta} + h R_{\alpha , \beta} .
\end{equation}
In particular, $Q^{d} \in \Psi ( 1 )$. For $d$ large enough, it verifies the following resolvent estimate.

\begin{proposition}\sl \label{a11}
Let $C > 0$ and $d > ( C + 3 + \im p_{1} ( 0 , 0 ) ) / \lambda_{1}$. For $h$ small enough and $z \in B ( 0 , C h )$, the operator $Q^{d} - z$ is invertible and
\begin{equation*}
\big\Vert ( Q^{d} - z )^{-1} \big\Vert \lesssim h^{- 1} .
\end{equation*}
\end{proposition}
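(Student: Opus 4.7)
The plan is to establish the coercive estimate
$$- \im \langle ( Q^{d} - z ) u , u \rangle \geq c h \Vert u \Vert^{2}$$
for some $c > 0$, uniformly in $u = ( u_{\alpha} )_{\vert \alpha \vert = d}$. This immediately gives $\Vert ( Q^{d} - z ) u \Vert \geq c h \Vert u \Vert$; applying the analogous argument to the adjoint $( Q^{d} - z )^{*}$ yields surjectivity, so $Q^{d} - z$ is invertible with $\Vert ( Q^{d} - z )^{- 1} \Vert \leq ( c h )^{- 1}$.

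First, since $p_{0}$ is real, formula \eqref{a5} gives
$$\im ( Q ) = - \sqrt{h}\, \Op ( 1 - \psi_{1} ) - t h \vert \ln h \vert\, \Op ( \{ p_{0} , g \} ) + h\, \im \Op ( p_{1} ) + O_{\Psi} ( h^{3 / 2} ) ,$$
and on the diagonal of $Q^{d} - z$ we further shift by $- h \alpha ( \lambda ) - \im z$ (see \eqref{a12}). The bulk of the work is then a careful accounting of each contribution to $- \im \langle ( Q^{d} - z ) u , u \rangle$. The diagonal anti-Hermitian term $- i h \alpha ( \lambda )$ provides the principal gain $\sum_{\alpha} h \alpha ( \lambda ) \Vert u_{\alpha} \Vert^{2} \geq d \lambda_{1} h \Vert u \Vert^{2}$, while $z \in B ( 0 , C h )$ loses at most $C h \Vert u \Vert^{2}$, and $h\, \im \Op ( p_{1} )$ loses essentially $h\, \im p_{1} ( 0 , 0 ) \Vert u \Vert^{2}$ up to $o ( h )$ (after localization of the symbol near $(0,0)$). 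The term $- \sqrt{h}\, \Op ( 1 - \psi_{1} )$ actually helps: since $1 - \psi_{1} \geq 0$, sharp Gårding makes its contribution to $- \im \langle Q u_{\alpha} , u_{\alpha} \rangle$ non-negative modulo $O ( h^{3 / 2} )$.

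For the dangerous term $- t h \vert \ln h \vert\, \Op ( \{ p_{0} , g \} )$, I split $\{ p_{0} , g \} = \{ p_{0} , g \} \psi_{1} + \{ p_{0} , g \} ( 1 - \psi_{1} )$: the first summand is non-negative by \eqref{a2}, so sharp Gårding bounds its contribution from below by $- O ( h^{2} \vert \ln h \vert )$; the second, bounded in absolute value by $C ( 1 - \psi_{1} )$, contributes at most $O ( h \vert \ln h \vert ) \langle \Op ( 1 - \psi_{1} ) u_{\alpha} , u_{\alpha} \rangle$, which is absorbed into the $\sqrt{h}\, \Op ( 1 - \psi_{1} )$ term since $h \vert \ln h \vert = o ( \sqrt{h} )$ as $h \to 0$. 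The off-diagonal operators $h R_{\alpha , \beta}$, whose symbols belong to $S ( 1 )$ and vanish at $( 0 , 0 )$, have $L^{2}$-norms that can be made as small as we wish by shrinking the supports of the cut-off functions $\psi_{\bullet}$ chosen at the start of Section~\ref{s3} to a sufficiently small neighborhood of $( 0 , 0 )$, so the total off-diagonal contribution is bounded by $h \Vert u \Vert^{2}$. Collecting all of these bounds yields
$$- \im \langle ( Q^{d} - z ) u , u \rangle \geq \big( d \lambda_{1} - C - \im p_{1} ( 0 , 0 ) - 3 \big) h \Vert u \Vert^{2}$$
for $h$ small enough, the buffer $3$ absorbing all Gårding errors, the off-diagonal contribution, and the small loss in the $p_{1}$ bound. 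The assumption $d > ( C + 3 + \im p_{1} ( 0 , 0 ) ) / \lambda_{1}$ then makes the coefficient strictly positive.

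The main obstacle is the $\vert \ln h \vert$ factor in front of $\Op ( \{ p_{0} , g \} )$, which a priori dominates the $O ( h )$ target gain. The resolution rests on the dichotomy provided by $\psi_{1}$: on its support \eqref{a2} gives the correct sign and sharp Gårding kills the logarithm, while off its support the $\sqrt{h}$-sink $\Op ( 1 - \psi_{1} )$ dominates because $t h \vert \ln h \vert = o ( \sqrt{h} )$. This is exactly the role of the auxiliary term $- i \sqrt{h}\, \Op ( 1 - \psi_{1} )$ in the definition \eqref{a4} of $Q$.
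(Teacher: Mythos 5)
Your proposal reproduces the overall outline of the paper's proof (take the imaginary part, use the sign of $\{p_0,g\}$ on $\supp\psi_1$ and the $\sqrt h\,\Op(1-\psi_1)$ sink outside, apply G\aa rding), and your treatment of those particular pieces is essentially right. However, you omit the single ingredient that the paper's proof actually turns on: the conjugation by $\Op(e^{\pm s\widetilde{g}})$ with the second escape function $\widetilde{g}(x,\xi)=x\cdot\xi\,\psi_4(x,\xi)$, which inserts into $-\im\widetilde{Q}$ a term $s h\,\Op(\{p_0,\widetilde{g}\})$ with principal symbol $\geq s\mu(x^2+\xi^2)\psi_4$, i.e.\ coercive of size $s\mu\nu^2 h$ on the annulus $\supp\psi_1\setminus B(0,\nu)$.

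This is not cosmetic: it is precisely what controls the two remainders you wave away. The off-diagonal entries $hR_{\alpha,\beta}$ (and also the deviation $h(\im p_1(x,\xi)-\im p_1(0,0))$, which the paper folds into $hR$) have symbols that vanish at $(0,0)$ but are only $\CO(1)$ in $S(1)$. Near the origin, on $B(0,\nu)$, their symbols are $\CO(\nu)$, giving a harmless $\CO(\nu h)$ loss against the diagonal gain $d\lambda_1 h$. But on $\supp\psi_1\setminus B(0,\nu)$ they are genuinely $\Theta(1)$, so $hR$ contributes a $\Theta(h)$ loss there that the diagonal gain alone cannot beat with the stated threshold for $d$; the paper kills it by taking $s$ large, using the $s\mu\nu^2 h$ gain from $\widetilde{g}$. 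Your substitute argument --- ``the $L^2$-norms of $R_{\alpha,\beta}$ can be made small by shrinking the supports of the cut-offs $\psi_\bullet$'' --- does not work: the $\psi_\bullet$ are constrained to equal $1$ on $\Omega_0$, whose size is fixed by the given geometric data ($\varepsilon$, $U$, $\SA_-$), so the symbols $r$ remain $\CO(1)$ at distance $\sim\varepsilon$ from the origin no matter how the auxiliary $\nu$ is chosen, and the operator norms $\Vert R_{\alpha,\beta}\Vert$ stay bounded away from $0$. The same objection applies to your claim that $h\,\im\Op(p_1)$ loses only $h\,\im p_1(0,0)\Vert u\Vert^2+o(h)$ ``after localization'': the localization is to $\Omega_0$, not to an arbitrarily small neighborhood of $(0,0)$. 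Without the $\widetilde{g}$-conjugation your coercivity estimate cannot close, and the threshold $d>(C+3+\im p_1(0,0))/\lambda_1$ cannot be obtained.
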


\begin{proof}
Let
\begin{equation}
\widetilde{g} ( x , \xi ) = x \cdot \xi \psi_{4} ( x , \xi ) ,
\end{equation}
with $\psi_{4} \in C^{\infty}_{0} ( \R^{2 n} ; [ 0 , 1 ] )$ with $\psi_{1} \prec \psi_{4}$. Then, we have
\begin{align*}
\{ p_{0} , \widetilde{g} \} &= \{ p_{0} , x \cdot \xi \} \psi_{4} ( x , \xi ) + \{ p_{0} , \psi_{4} ( x , \xi ) \} x \cdot \xi  \\
&= \Big( 2 \xi^{2} + \sum_{j = 1}^{n} \frac{\lambda_{j}^{2}}{2} x_{j}^{2} + \CO \big( ( x , \xi )^{3} \big) \Big) \psi_{4} ( x , \xi ) + \varphi_{4} ( x , \xi ) ,
\end{align*}
with $\varphi_{4} \in C^{\infty}_{0} ( \R^{2 n} )$ supported inside $\supp \nabla \psi_{4}$. In particular,
\begin{equation} \label{a13}
\{ p_{0} , \widetilde{g} \} \geq \mu ( \xi^{2} + x^{2} ) \psi_{4} + \varphi_{4} ,
\end{equation}
for some $\mu > 0$ if $\supp \psi_{4}$ is sufficiently close to $( 0 , 0 )$.

For $z \in B ( 0 , C h )$ and $s > 0$, we define
\begin{equation} \label{a14}
\widetilde{Q} = \big( \Op ( e^{- s \widetilde{g}} ) \otimes I d \big) \circ ( Q^{d} - z ) \circ \big( \Op ( e^{s \widetilde{g}} ) \otimes I d \big) .
\end{equation}
From \eqref{a5}, \eqref{a12} and $e^{\pm s \widetilde{g}} \in S ( 1 )$, the pseudodifferential calculus gives
\begin{align*}
\widetilde{Q}_{\alpha , \beta} &= \Op ( e^{- s \widetilde{g}} ) \big( ( Q - i h \alpha ( \lambda ) - z ) \delta_{\alpha , \beta} + h R_{\alpha , \beta} \big) \Op ( e^{s \widetilde{g}} )   \\
&= ( Q - i h \alpha ( \lambda ) - z ) \delta_{\alpha , \beta} + h R_{\alpha , \beta} + \Op ( e^{- s \widetilde{g}} ) \big[ \Op ( p_{0} ) , \Op ( e^{s \widetilde{g}} ) \big] \delta_{\alpha , \beta} + \Psi ( h^{3 / 2} )  \\
&= ( Q - i h \alpha ( \lambda ) - z ) \delta_{\alpha , \beta} + h R_{\alpha , \beta} - i h \Op ( e^{- s \widetilde{g}} ) \Op \big( \{ p_{0} , e^{s \widetilde{g}} \} \big) \delta_{\alpha , \beta} + \Psi ( h^{3 / 2} )  \\
&= ( Q - i h \alpha ( \lambda ) - z ) \delta_{\alpha , \beta} + h R_{\alpha , \beta} - i s h \Op ( \{ p_{0} , \widetilde{g} \} ) \delta_{\alpha , \beta} + \Psi ( h^{3 / 2} ) .
\end{align*}
since
\begin{equation} \label{a72}
\Op ( e^{- s \widetilde{g}} ) \Op ( e^{s \widetilde{g}} ) = 1 + \frac{h}{2 i} \Op \big( \{ e^{- s \widetilde{g}} , e^{s \widetilde{g}} \} \big) + \Psi ( h^{2} ) = 1 + \Psi ( h^{2} ) .
\end{equation}
Taking the imaginary part of the previous equation, \eqref{a5} yields
\begin{align}
- \im \widetilde{Q} &= - \frac{\widetilde{Q} + \widetilde{Q}^{*}}{2 i}    \nonumber \\
&= \diag \big( - h \Op ( \im p_{1} ) + t h \vert \ln h \vert \Op ( \{ p_{0} , g \} ) + \sqrt{h} \Op ( 1 - \psi_{1} )  \nonumber \\
&\qquad \qquad \qquad \quad + s h \Op ( \{ p_{0} , \widetilde{g} \} ) + h \alpha ( \lambda ) + \im z \big) + h R + \Psi ( h^{3 / 2} )   \nonumber \\
&\geq \diag \big( - h \im p_{1} ( 0 , 0 ) + t h \vert \ln h \vert \Op ( \{ p_{0} , g \} ) + \sqrt{h} \Op ( 1 - \psi_{1} )  \nonumber \\
&\qquad \qquad \qquad \quad + s h \Op ( \{ p_{0} , \widetilde{g} \} ) + h \lambda_{1} d + \im z \big) + h R + \Psi ( h^{3 / 2} ) .
\end{align}
Note that $- \im \widetilde{Q} \in \Psi ( h^{1 / 2} )$. From \eqref{a2}, \eqref{a13}, $\varphi_{4} \prec 1 - \psi_{1}$, that the symbol of $R$ vanishes at $( 0 , 0 )$ and the assumptions of Proposition \ref{a11}, its symbol satisfies
\begin{equation*}
- \im \widetilde{q} ( x , \xi ) \geq \left\{ \begin{aligned}
&\sqrt{h} - \CO_{s} ( h \vert \ln h \vert ) &&\text{for } x \notin \supp \psi_{1} ,  \\
&s \mu \nu^{2} h - \CO ( h ) - \CO_{s} ( h^{3 / 2} ) &&\text{for } x \in \supp \psi_{1} \setminus B ( 0 , \nu ) ,  \\
&3 h - \CO ( \nu h ) - \CO_{s} ( h^{3 / 2} ) \qquad &&\text{for } x \in B ( 0 , \nu ) ,
\end{aligned} \right.
\end{equation*}
as an $n^{d} \times n^{d}$ matrix, uniformly for $\nu > 0$ sufficiently small. In the previous equation, $\CO_{s} ( 1 )$ denotes a function which is bounded by a constant which may depend on the parameter $s$. Taking $\nu > 0$ small enough and then $s$ large enough, we obtain in the sense of $n^{d} \times n^{d}$ matrices
\begin{equation}
- \im \widetilde{q} ( x , \xi ) \geq 2 h ,
\end{equation}
for all $( x , \xi ) \in \R^{2 n}$ and $h$ small enough. Therefore, G{\aa}rding's inequality implies
\begin{equation*}
- \im \widetilde{Q} \geq 2 h - \CO ( h^{3 / 2} ) .
\end{equation*}
In other words,
\begin{equation*}
\big\Vert \widetilde{Q} u \big\Vert \Vert u \Vert \geq - \im \big( \widetilde{Q} u , u \big) \geq \big( 2 h - \CO ( h^{3 / 2} ) \big) \Vert u \Vert^{2} \geq h \Vert u \Vert^{2} ,
\end{equation*}
for all $u \in L^{2} ( \R^{n} )$ and $h$ small enough. Thus, $\Vert \widetilde{Q} u \Vert \geq h \Vert u \Vert$. Since the adjoint of $\widetilde{Q}$ satisfies a similar estimate, $\widetilde{Q}$ is invertible and
\begin{equation} \label{a15}
\big\Vert \widetilde{Q}^{- 1} \big\Vert \leq h^{- 1} .
\end{equation}

Finally, using $e^{\pm s \widetilde{g}} \in S ( 1 )$ and \eqref{a72}, Calder\'{o}n--Vaillancourt's theorem shows that the operators $\Op ( e^{\pm s \widetilde{g}} )$ are invertible with uniformly bounded inverse for $h$ small enough. Combining with \eqref{a14} and \eqref{a15}, the proposition follows.
\end{proof}

Let $u$ be a function in $L^{2} ( \R^{n} )$ satisfying the assumptions of Theorem \ref{a1}. We define
\begin{equation} \label{a21}
v = \Op ( e^{- t \vert \ln h \vert g} ) \Op ( \psi_{5} ) u ,
\end{equation}
with $\psi_{5} \in C^{\infty}_{0} ( \R^{2 n} ; [ 0 , 1 ] )$ such that $\one_{\Omega_{1}} \prec \psi_{5} \prec \one_{\Omega_{0}}$. In particular, the derivatives of $\psi_{5}$ are supported inside $\Omega_{0} \setminus \Omega_{1}$. The function $v \in \CS ( \R^{n} )$ satisfies the following estimates.

\begin{lemma}\sl \label{a24}
Let $d_{0} = [ ( C + 3 + \im p_{1} ( 0 , 0 ) ) / \lambda_{1} ] + 1 \in \N$ and $t > 0$. We have
\begin{equation} \label{a73}
\Vert A^{\alpha} v \Vert_{L^{2} ( \R^{n} )} = \CO \big( h^{\vert \alpha \vert - d_{0}} \vert \ln h \vert^{\vert \alpha \vert - d_{0}} \big) ,
\end{equation}
for all finite sequences $\alpha$ satisfying $0 \leq \vert \alpha \vert \leq c t + d_{0} - 1$.
\end{lemma}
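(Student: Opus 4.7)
The plan combines three ingredients: the estimate $\|(Q-z)v\|_{L^{2}} = \CO(h^{ct})$ (to be established first), the commutation identity of Lemma~\ref{a10}, and the resolvent bound of Proposition~\ref{a11} (extended to any $k \geq d_{0}$). With these in hand, I proceed by induction on $|\alpha|$. The cases $|\alpha| < d_{0}$ are covered trivially by $A_{j} \in \Psi(1)$ and $\|v\|_{L^{2}} = \CO(1)$, so that $\|A^{\alpha}v\| \leq C \leq C(h|\ln h|)^{|\alpha|-d_{0}}$ since the exponent is negative.

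To estimate $(Q-z)v$, unwinding the definitions \eqref{a4} and \eqref{a21} and using \eqref{a3} gives
\begin{equation*}
(Q-z)v = \Op(e^{-t|\ln h|g})(P-z)\Op(\psi_{5})u - i\sqrt{h}\,\Op(1-\psi_{1})v + \CO(h^{\infty}).
\end{equation*}
The second term is $\CO(h^{\infty})$ because $\psi_{1} = 1$ on $\supp \psi_{5}$ and hence on the microsupport of $v$. Next, $(P-z)\Op(\psi_{5})u = \Op(\psi_{5})(P-z)u + [P,\Op(\psi_{5})]u$, where the first summand is $\CO(h^{\infty})$ by the hypothesis $(P-z)u = 0$ microlocally near $\Omega \supset \supp \psi_{5}$, and the commutator has symbol supported in $\supp\nabla\psi_{5} \subset \SA_{-} \cup \SA_{0} \cup \SA_{+}$. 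On $\SA_{-} \subset U$ the contribution is $\CO(h^{\infty})$ by the hypothesis on $u$; on $\SA_{0}$, geometric control by $\SA_{-}$ combined with the standard semiclassical propagation of singularities for $P - z$ (valid since $\im z = \CO(h)$ on $\CO(1)$-time trajectories) propagates the vanishing and yields $\CO(h^{\infty})$; on $\SA_{+}$, the bound $g \geq c$ from \eqref{a26} produces a gain $h^{ct}$ upon composition with $\Op(e^{-t|\ln h|g})$. Altogether $\|(Q-z)v\|_{L^{2}} = \CO(h^{ct})$.

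For the inductive step at length $k$ with $d_{0} \leq k \leq ct+d_{0}-1$, I assemble Lemma~\ref{a10} over all sequences of length $k$. The $M$ term is $\CO(h^{\infty})$ since $\supp\nabla\psi_{3}$ avoids $\Omega_{0}$ (because $\psi_{3} = 1$ on $\Omega_{0}$), hence avoids the microsupport of $v$. This gives, in vector form,
\begin{equation*}
(Q^{k}-z)(A^{\alpha}v)_{|\alpha|=k} = (A^{\alpha}(Q-z)v)_{|\alpha|=k} + \CO(h^{\infty}) + \sum_{|\gamma|<k}\Psi(h^{k-|\gamma|+1}|\ln h|)\, A^{\gamma}v.
\end{equation*}
The proof of Proposition~\ref{a11} extends verbatim to every $k \geq d_{0}$ via $\alpha(\lambda) \geq k\lambda_{1} \geq d_{0}\lambda_{1} > C+3+\im p_{1}(0,0)$, so $\|(Q^{k}-z)^{-1}\| \lesssim h^{-1}$. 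Applying this together with the inductive hypothesis $\|A^{\gamma}v\| \leq C(h|\ln h|)^{|\gamma|-d_{0}}$ yields
\begin{equation*}
\|A^{\alpha}v\| \lesssim h^{-1}\Big( h^{ct} + \sum_{|\gamma|<k} h^{k-|\gamma|+1}|\ln h|\,(h|\ln h|)^{|\gamma|-d_{0}}\Big) \lesssim h^{ct-1} + (h|\ln h|)^{k-d_{0}}.
\end{equation*}
The constraint $k \leq ct+d_{0}-1$ is precisely what forces $h^{ct-1} \lesssim (h|\ln h|)^{k-d_{0}}$, closing the induction.

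The main technical hurdle is the propagation-of-singularities step from $\SA_{-}$ to $\SA_{0}$: the geometric control built into the construction of $\Omega_0$ and the $\SA_{\bullet}$ guarantees that bicharacteristic trajectories of $p_{0}$ issuing from $\SA_{-}$ reach every point of $\SA_{0}$ while remaining in $\Omega_0$, so the standard semiclassical propagation theorem applies to $P-z$ despite its $\CO(h)$ imaginary part. The remaining work is routine bookkeeping of indices and logarithmic factors, carefully tracking that the $\Psi(h^{k-|\gamma|+1}|\ln h|)$ remainders in Lemma~\ref{a10} combine with the inductive estimates to reproduce the target $(h|\ln h|)^{k-d_{0}}$ bound exactly.
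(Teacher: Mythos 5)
Your proposal is correct and follows essentially the same route as the paper: establish $\Vert(Q-z)v\Vert=\CO(h^{ct})$ via the decomposition over $\SA_{-},\SA_{0},\SA_{+}$, invoke Lemma~\ref{a10} and the uniform resolvent bound of Proposition~\ref{a11} for every $d\geq d_{0}$, and close by induction on $\vert\alpha\vert$, with the constraint $\vert\alpha\vert\leq ct+d_{0}-1$ absorbing the $h^{ct-1}$ remainder. The one small imprecision is the claim $\Vert v\Vert_{L^{2}}=\CO(1)$: since derivatives of $e^{-t\vert\ln h\vert g}$ cost factors of $\vert\ln h\vert$, one only has $e^{-t\vert\ln h\vert g}\in S(h^{-1/2})$ and hence $\Vert v\Vert=\CO(h^{-1/2})$, but this is still dominated by the target $(h\vert\ln h\vert)^{\vert\alpha\vert-d_{0}}$ when $\vert\alpha\vert<d_{0}$, so the base case goes through unchanged.
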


\begin{proof}
For the first values of $\vert \alpha \vert$, we use $A_{\bullet} \in \Psi ( 1 )$, $\Vert u \Vert \leq 1$ and $e^{- t \vert \ln h \vert g} \in S ( h^{- 1 / 2} )$ since $g \geq 0$ (see \eqref{a16}). Combining with \eqref{a21}, it implies
\begin{equation} \label{a25}
A^{\alpha} v = A^{\alpha} \Op ( e^{- t \vert \ln h \vert g} ) \Op ( \psi_{5} ) u = \CO ( h^{- 1 / 2} ) = \CO \big( h^{\vert \alpha \vert - d_{0}} \vert \ln h \vert^{\vert \alpha \vert - d_{0}} \big) ,
\end{equation}
for all finite sequence $\alpha$ with $\vert \alpha \vert < d_{0}$.

The proof for the larger values of $\vert \alpha \vert$ rests on the propagation equation on $u$. Using that the supports of $1 - \psi_{1}$ and $\psi_{5}$ are disjoint, \eqref{a4} gives
\begin{align}
( Q - z ) v &= \Op ( e^{- t \vert \ln h \vert g} ) ( P - z ) \Op ( \psi_{5} ) u + \CO ( h^{\infty} )  \nonumber \\
&= \Op ( e^{- t \vert \ln h \vert g} ) \Op ( \psi_{5} ) ( P - z ) u + \Op ( e^{- t \vert \ln h \vert g} ) \big[ P , \Op ( \psi_{5} ) \big] u + \CO ( h^{\infty} )   \nonumber \\
&= \Op ( e^{- t \vert \ln h \vert g} ) \big( \Op ( \psi_{\SA_{-}} ) + \Op ( \psi_{\SA_{0}} ) + \Op ( \psi_{\SA_{+}} ) \big) u + \CO ( h^{\infty} ) , \label{a17}
\end{align}
since $( P - z ) u = 0$ microlocally near $\supp \psi_{5}$. In the previous expression, $\psi_{\SA_{\bullet}} \in S ( h )$ is supported inside $\SA_{\bullet}$. By assumption, $u = 0$ microlocally near $\SA_{-}$. Since $\SA_{0}$ is geometrically controlled by $\SA_{-}$, the standard propagation of singularities implies that $u = 0$ microlocally near $\SA_{0}$. In particular,
\begin{equation} \label{a18}
\Op ( \psi_{\SA_{-}} ) u = \CO ( h^{\infty} ) \qquad \text{and} \qquad \Op ( \psi_{\SA_{0}} ) u = \CO ( h^{\infty} ) .
\end{equation}
For the remainder term of \eqref{a17}, we write
\begin{equation*}
\Op ( e^{- t \vert \ln h \vert g} ) \Op ( \psi_{\SA_{+}} ) = \Op ( e^{- t \vert \ln h \vert g} \varphi_{\SA_{+}} ) \Op ( \psi_{\SA_{+}} ) + \CO ( h^{\infty} ) ,
\end{equation*}
where $\varphi_{\SA_{+}} \in C^{\infty}_{0} ( \SA_{+} ; [ 0 , 1 ] )$ is any function such that $\psi_{\SA_{+}} \prec \varphi_{\SA_{+}}$. On the other hand, \eqref{a26} shows that $e^{- t \vert \ln h \vert g} \varphi_{\SA_{+}} \in S ( h^{c t - 1} )$ and then
\begin{equation} \label{a19}
\Op ( e^{- t \vert \ln h \vert g} ) \Op ( \psi_{\SA_{+}} ) u = \CO ( h^{c t} ) .
\end{equation}
Thus, \eqref{a17} together with \eqref{a18} and \eqref{a19} gives
\begin{equation} \label{a20}
( Q - z ) v = \CO ( h^{c t} ) .
\end{equation}

Moreover, since $\psi_{5} = 0$ near the support of the symbol $m$ of $M$, we have $M v = \CO ( h^{\infty} )$. Then, Lemma \ref{a10}, \eqref{a12}, \eqref{a20} and $A_{\bullet} \in \Psi ( 1 )$ lead to
\begin{equation*}
( Q^{d} - z ) ( A^{\alpha} v )_{\vert \alpha \vert = d} = \sum_{\vert \gamma \vert < d} \Psi \big( h^{d - \vert \gamma \vert + 1} \vert \ln h \vert \big) A^{\gamma} v + \CO ( h^{c t} ) ,
\end{equation*}
for all $d \in \N$ where $( A^{\alpha} v )_{\vert \alpha \vert = d}$ is an $n^{d}$ vector of $L^{2} ( \R^{n} )$. From Proposition \ref{a11}, we get
\begin{align}
A^{\alpha} v &= \sum_{\vert \gamma \vert < \vert \alpha \vert} \CO \big( h^{\vert \alpha \vert - \vert \gamma \vert } \vert \ln h \vert \big) A^{\gamma} v + \CO ( h^{c t - 1} )   \nonumber \\
&= \sum_{\vert \gamma \vert < \vert \alpha \vert} \CO \big( h^{\vert \alpha \vert - \vert \gamma \vert} \vert \ln h \vert^{\vert \alpha \vert - \vert \gamma \vert} \big) A^{\gamma} v + \CO ( h^{c t - 1} ) ,  \label{a22}
\end{align}
for all $\alpha$ with $\vert \alpha \vert \geq d_{0}$. We now prove the required result by induction over $\vert \alpha \vert$. Assume that \eqref{a73} holds true for all sequences $\alpha$ with $\vert \alpha \vert < d$ and $d_{0} \leq d \leq c t + d_{0} -1$. Then, \eqref{a22} and the induction hypothesis imply
\begin{align*}
A^{\alpha} v &= \sum_{\vert \gamma \vert < \vert \alpha \vert} \CO \big( h^{\vert \alpha \vert - \vert \gamma \vert} \vert \ln h \vert^{\vert \alpha \vert - \vert \gamma \vert} \big) \CO \big( h^{\vert \gamma \vert - d_{0}} \vert \ln h \vert^{\vert \gamma \vert - d_{0}} \big) + \CO ( h^{c t - 1} )  \\
&= \CO \big( h^{\vert \alpha \vert - d_{0}} \vert \ln h \vert^{\vert \alpha \vert - d_{0}} \big) + \CO ( h^{c t - 1} ) = \CO \big( h^{\vert \alpha \vert - d_{0}} \vert \ln h \vert^{\vert \alpha \vert - d_{0}} \big) ,
\end{align*}
for all $\alpha$ of length $d$. For the last equality, we have used $c t - 1 \geq d - d_{0} \geq 0$. Thus, \eqref{a73} holds true for all sequences $\alpha$ with $\vert \alpha \vert \leq d$, and the lemma follows by induction.
\end{proof}

Let $\psi_{6} \in C^{\infty}_{0} ( \R^{2 n} ; [ 0 , 1 ] )$ be a cut-off function with $\one_{( 0 , 0 )} \prec \psi_{6} \prec \psi_{5}$ and $\psi_{6} = 0$ near the support of $g$. We also consider a base space cut-off function $\chi_{6} \in C^{\infty}_{0} ( \R^{n} ; [ 0 , 1 ] )$ with $\one_{0} \prec \chi_{6} \prec \one_{\pi_{x} ( \Lambda_{+} \cap \supp \psi_{6} )}$. We then define
\begin{equation} \label{a30}
w = \chi_{6} \Op ( \psi_{6} ) u .
\end{equation}
It is important to note that $w$ is independent of $t$, contrarily to $v$. The localization properties of $\psi_{6}$ and \eqref{a21} give
\begin{align*}
w &= \chi_{6} \Op ( \psi_{6} ) \Op ( \psi_{5} ) u + \CO ( h^{\infty} )   \\
&= \chi_{6} \Op ( \psi_{6} ) \Op ( e^{- t \vert \ln h \vert g} ) \Op ( \psi_{5} ) u + \CO ( h^{\infty} )    \\
&= \chi_{6} \Op ( \psi_{6} ) v + \CO ( h^{\infty} ) .
\end{align*}
Commuting the $A_{\bullet}$'s appearing in $A^{\alpha}$ with $\chi_{6} \Op ( \psi_{6} )$, we get
\begin{align}
A^{\alpha} w &= A^{\alpha} \chi_{6} \Op ( \psi_{6} ) v + \CO ( h^{\infty} )   \nonumber \\
&= \chi_{6} \Op ( \psi_{6} ) A^{\alpha} v + \sum_{\vert \beta \vert < \vert \alpha \vert} \CO ( h^{\vert \alpha \vert - \vert \beta \vert} ) A^{\beta} v + \CO ( h^{\infty} ) .
\end{align}
Thus, Lemma \ref{a24} gives $A^{\alpha} w = \CO ( h^{\vert \alpha \vert - d_{0}} \vert \ln h \vert^{\vert \alpha \vert - d_{0}} )$ for all $0 \leq \vert \alpha \vert \leq c t + d_{0} - 1$. Since $w$ is independent of $t$, this parameter can be taken as large as we want. Then, we have
\begin{equation} \label{a29}
A^{\alpha} w = \CO ( h^{\vert \alpha \vert - d_{0} - 1} ) ,
\end{equation}
for all finite sequence $\alpha$. It implies

\begin{lemma}\sl \label{a28}
There exists $a ( x , h ) \in S ( h^{- d_{0} - 1} )$ supported inside $\supp \chi_{6}$ such that
\begin{equation*}
w ( x , h ) = a ( x , h ) e^{i \varphi_{+} ( x ) / h} .
\end{equation*}
\end{lemma}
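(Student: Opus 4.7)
The plan is to set $a ( x , h ) := e^{- i \varphi_{+} ( x ) / h} w ( x , h )$ and translate the derivative estimates \eqref{a29} directly into symbol bounds on $a$. First, since $u \in L^{2} ( \R^{n} )$ the function $\Op ( \psi_{6} ) u$ lies in $\CS ( \R^{n} )$, so $w = \chi_{6} \Op ( \psi_{6} ) u$ is smooth and compactly supported in $\supp \chi_{6}$. As $\varphi_{+}$ is smooth near $0$ (hence near $\supp \chi_{6}$), the candidate $a$ is automatically a smooth compactly supported function of modulus equal to $\vert w \vert$.

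The key algebraic remark is that on the region $\{ \psi_{3} \equiv 1 \}$, the operator $A_{j} = \Op ( ( \xi_{j} - \partial_{x_{j}} \varphi_{+} ) \psi_{3} )$ is just the first-order differential operator $\frac{h}{i} \partial_{x_{j}} - \partial_{x_{j}} \varphi_{+} ( x )$, which factors as $\frac{h}{i} e^{i \varphi_{+} / h} \partial_{x_{j}} e^{- i \varphi_{+} / h}$. Since $\supp \psi_{6} \subset \Omega_{0}$ while $\psi_{3} \equiv 1$ near $\Omega_{0}$, the pseudodifferential operator $\Op ( ( \xi_{j} - \partial_{x_{j}} \varphi_{+} ) ( \psi_{3} - 1 ) )$ has symbol vanishing on the microsupport of $w$, and therefore
\begin{equation*}
A_{j} w = \Big( \frac{h}{i} \partial_{x_{j}} - \partial_{x_{j}} \varphi_{+} \Big) w + \CO ( h^{\infty} ) = \frac{h}{i} e^{i \varphi_{+} / h} \partial_{x_{j}} a + \CO ( h^{\infty} ) .
\end{equation*}

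The main technical step is to iterate this identity and establish, by induction on $\vert \alpha \vert$, the formula
\begin{equation*}
A^{\alpha} w = \Big( \frac{h}{i} \Big)^{\vert \alpha \vert} e^{i \varphi_{+} / h} \partial^{\alpha} a + \CO ( h^{\infty} ) .
\end{equation*}
At each step the intermediate expression $e^{i \varphi_{+} / h} \partial^{\beta} a$ must have microsupport inside $\supp \psi_{6}$ so that the same reduction applies when the next $A_{j}$ is introduced. This is exactly guaranteed by the condition $\chi_{6} \prec \one_{\pi_{x} ( \Lambda_{+} \cap \supp \psi_{6} )}$: since $\pi_{x}$ is a diffeomorphism from $\Lambda_{+}$ onto its image, the set $\{ ( x , \nabla \varphi_{+} ( x ) ) ; \ x \in \supp \chi_{6} \}$ is contained in $\Lambda_{+} \cap \supp \psi_{6} \subset \supp \psi_{6}$, and any Lagrangian distribution with phase $\varphi_{+}$ and base-space amplitude supported in $\supp \chi_{6}$ is microlocalized there.

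Once this identity is in place, combining with \eqref{a29} and $\vert e^{i \varphi_{+} / h} \vert = 1$ gives $\Vert \partial^{\alpha} a \Vert_{L^{2}} = \CO ( h^{- d_{0} - 1} )$ for every $\alpha \in \N^{n}$. Because $a$ is compactly supported, the Sobolev embedding $H^{s} \hookrightarrow L^{\infty}$ for $s > n / 2$ applied to $\partial^{\alpha} a$ upgrades this to $\Vert \partial^{\alpha} a \Vert_{L^{\infty}} = \CO ( h^{- d_{0} - 1} )$, which is exactly the statement $a \in S ( h^{- d_{0} - 1} )$. The main obstacle I anticipate is the wavefront-set bookkeeping in the induction: one has to track carefully that the nested cutoffs $\psi_{3}, \psi_{5}, \psi_{6}, \chi_{6}$ chosen in Section \ref{s3} are arranged so that every intermediate distribution produced during the induction remains microlocalized inside the region where $A_{j}$ coincides with the expected conjugated differential operator.
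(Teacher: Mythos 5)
Your proposal is correct and matches the paper's proof in all essentials: same definition $a = e^{-i\varphi_{+}/h} w$, same conjugation identity, same use of \eqref{a29} to bound $\partial^{\alpha} a$ in $L^{2}$, and the same Sobolev-embedding conclusion. The only distinction is that you perform the microlocal replacement of the differential operator $\tfrac{h}{i}\partial_{x_{j}} - \partial_{x_{j}}\varphi_{+}$ by $A_{j}$ at each step of the induction (which forces the wavefront-set bookkeeping you flag), whereas the paper first records the purely algebraic identity $D^{\alpha} a = h^{-\vert\alpha\vert} e^{-i\varphi_{+}/h}\bigl(hD - \nabla\varphi_{+}\bigr)^{\alpha} w$ and only afterwards replaces the whole product by $A^{\alpha}$ modulo $\CO(h^{\infty})$, invoking $\psi_{6} \prec \psi_{3}$ and the microlocalization of $w$ a single time, which makes the bookkeeping slightly lighter.
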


This result follows from the standard characterization of the Lagrangian distributions in terms of pseudodifferential operators with symbol vanishing on the corresponding Lagrangian manifold (see e.g. H{\"o}rmander \cite[Proposition 25.1.5]{Ho94_01}). For the sake of completeness, we give the details in the present semiclassical setting.

\begin{proof}
We define $a = e^{- i \varphi_{+} / h} w$. From \eqref{a30}, this $C^{\infty}$ function is supported inside $\supp \chi_{6}$. For a finite sequence $\alpha = ( \alpha_{1} , \ldots , \alpha_{\vert \alpha \vert} )$ of elements in $\{ 1 , \ldots , n \}$, we set $D^{\alpha} = D_{x_{\alpha_{1}}} \cdots D_{x_{\alpha_{\vert \alpha \vert}}}$ and we have
\begin{align*}
D^{\alpha} a &= D_{x_{\alpha_{1}}} \cdots D_{x_{\alpha_{\vert \alpha \vert}}} e^{- i \varphi_{+} / h} w    \\
&= h^{- \vert \alpha \vert} e^{- i \varphi_{+} / h} \big( h D_{x_{\alpha_{1}}} - \partial_{x_{\alpha_{1}}} \varphi_{+} \big) \cdots \big( h D_{x_{\alpha_{\vert \alpha \vert}}} - \partial_{x_{\alpha_{\vert \alpha \vert}}} \varphi_{+} \big) w .
\end{align*}
Since $\psi_{6} \prec \psi_{3}$, \eqref{a6} yields
\begin{equation*}
D^{\alpha} a = h^{- \vert \alpha \vert} e^{- i \varphi_{+} / h} A_{\alpha_{1}} \cdots A_{\alpha_{\vert \alpha \vert}} w + \CO ( h^{\infty} ) = h^{- \vert \alpha \vert} e^{- i \varphi_{+} / h} A^{\alpha} w + \CO ( h^{\infty} ) .
\end{equation*}
Thus, \eqref{a29} implies $\Vert D^{\alpha} a \Vert_{L^{2} ( \R^{n} )} = \CO ( h^{- d_{0} - 1} )$. In other words, we have
\begin{equation}
\Vert a \Vert_{H^{s} ( \R^{n} )} = \CO ( h^{- d_{0} - 1} ) ,
\end{equation}
for all $s \in \R$. The usual Sobolev embeddings give $\Vert a \Vert_{C^{k} ( \R^{n} )} = \CO ( h^{- d_{0} - 1} )$ for all $k \in \N$. Then, $a \in S ( h^{- d_{0} - 1} )$ and the lemma follows.
\end{proof}

We now prove that $a ( x , h ) = \CO ( h^{\infty} )$ near $0$. The first step is to demonstrate that the germ of $a$ at $0$ is small. For that, we define on $\R^{n}$ the vector field
\begin{equation*}
H_{p_{0}}^{+} = \partial_{\xi} p_{0} ( x , \nabla \varphi_{+} ( x ) ) \cdot \partial_{x} .
\end{equation*}
It corresponds to the restriction of the Hamiltonian vector field to $\Lambda_{+}$. Then, $ ( x ( t ) , \xi ( t ) )$ is a Hamiltonian trajectory if and only if $x ( t )$ is an integral curve of $H_{p_{0}}^{+}$ and $\xi ( t ) = \nabla \varphi_{+} ( x ( t ) )$. We also define
\begin{equation} \label{a36}
x^{+} ( t , x ) = \exp ( t H_{p_{0}}^{+} ) ( x ) ,
\end{equation}
the associated flow of $H_{p_{0}}^{+}$. In particular, there exists $c_{0} > 0$ such that
\begin{equation} \label{a33}
\vert x^{+} ( t , x ) \vert \leq e^{- c_{0} \vert t \vert} \vert x \vert ,
\end{equation}
for all $t \leq 0$ and $x$ small enough. Let $W_{1} , W_{2} , \ldots$ and $W_{\infty}$ be open subsets of $\R^{n}$ such that
\begin{equation*}
0 \in W_{\infty} \Subset \cdots \Subset W_{2} \Subset W_{1} \Subset \overset{\circ}{\supp \chi_{6}} ,
\end{equation*}
and such that $x \in W_{j}$, $t \leq 0$ imply $x^{+} ( t , x ) \in W_{j}$. The construction of such sets follows from the Hartman--Grobman theorem.

By assumption, we have $( P -z ) u = 0$ microlocally near $\Omega$. Moreover, \eqref{a30} implies that $w = u$ microlocally near each point of the interior of $( \chi_{6} \psi_{6})^{- 1} ( 1 )$. Then, the rules of calculus for a pseudodifferential operator applied to a Lagrangian distribution give
\begin{align}
\partial_{\xi} p_{0} ( x , & \nabla \varphi_{+} ( x ) ) \cdot \nabla a ( x , h )   \nonumber \\
&+ \Big( \frac{1}{2} \divergence_{x} \big( \partial_{\xi} p_{0} ( x , \nabla \varphi_{+} ( x ) ) \big) + i p_{1} ( x , \nabla \varphi_{+} ( x ) ) - i \frac{z}{h} \Big) a ( x , h ) \in S ( h^{- d_{0}} ) ,  \label{a31}
\end{align}
for $x \in W_{1}$. We refer to Theorem 25.2.4 and (25.2.11) of H\"{o}rmander \cite{Ho94_01} in the classical case, the semiclassical one being a straightforward generalization. From \ref{h5} and \eqref{a27}, this equation can be written
\begin{equation} \label{a65}
\big( L x + \SF ( x^{2} ) \big) \cdot \nabla a ( x , h ) + \Big( \sum_{j = 1}^{n} \frac{\lambda_{j}}{2} + i p_{1} ( 0 , 0 ) - i \frac{z}{h} + \SF ( x ) \Big) a ( x , h ) \in S ( h^{- d_{0}} ) ,
\end{equation}
where $L = \diag ( \lambda_{j} )$ and $\SF ( x^{m} )$ denotes a smooth function, which is $\CO ( x^{m} )$ near $0$, changing from occurrence to occurrence. After derivation, this equation becomes
\begin{align*}
\big( L x + \SF ( x^{2} ) \big) \cdot \nabla \partial_{x}^{\alpha} a & ( x, h ) + \Big( \sum_{j = 1}^{n} \lambda_{j} \Big( \frac{1}{2} + \alpha_{j} \Big) + i p_{1} ( 0 , 0 ) - i \frac{z}{h} \Big) \partial_{x}^{\alpha} a ( x , h ) \\
&+ \sum_{\vert \beta \vert = \vert \alpha \vert} \SF ( x ) \partial_{x}^{\beta} a ( x , h ) + \sum_{\vert \beta \vert \leq \vert \alpha \vert - 1} \SF ( 1 ) \partial_{x}^{\beta} a ( x , h ) \in S ( h^{- d_{0}} ) .
\end{align*}
Taking $x = 0$ leads to
\begin{equation}
\Big( \sum_{j = 1}^{n} \lambda_{j} \Big( \frac{1}{2} + \alpha_{j} \Big) + i p_{1} ( 0 , 0 ) - i \frac{z}{h} \Big) \partial_{x}^{\alpha} a ( 0 , h ) = \sum_{\vert \beta \vert \leq \vert \alpha \vert - 1} \CO \big( \partial_{x}^{\beta} a ( 0 , h ) \big) + \CO ( h^{- d_{0}} ) .
\end{equation}
Since $z$ is at distance at least $\delta h$ from $\Gamma_{0} ( h )$ by assumption, a recurrence over $\vert \alpha \vert$ gives

\begin{lemma}\sl \label{a32}
For all multi-index $\alpha \in \N^{n}$, we have $\partial_{x}^{\alpha} a ( 0 , h ) = \CO ( h^{- d_{0}} )$.
\end{lemma}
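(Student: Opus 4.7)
The plan is a direct induction on $|\alpha|$ based on the recursive identity displayed just before the lemma, combined with the distance hypothesis on $z$ relative to $\Gamma_0(h)$.

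First, set
\begin{equation*}
c_{\alpha}(z,h) = \sum_{j=1}^{n} \lambda_{j}\Big(\frac{1}{2}+\alpha_{j}\Big) + i p_{1}(0,0) - i\frac{z}{h} .
\end{equation*}
A short computation gives
\begin{equation*}
-i h\, c_{\alpha}(z,h) = z - \Big(-ih\sum_{j=1}^{n} \lambda_{j}\Big(\tfrac{1}{2}+\alpha_{j}\Big) + h p_{1}(0,0)\Big),
\end{equation*}
so the quantity on the right is exactly $z-z_{\alpha}$ up to the discrepancy $h(p_{1}(0,0)-p_{1}(0,0,h)) = \CO(h^{2})$ between $p_{1}(0,0)$ and the value $p_{1}(0,0,h)$ defining $\Gamma_{0}(h)$. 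Since $z\in B(0,Ch)$ and $\dist(z,\Gamma_{0}(h))\geq \delta h$, we deduce
\begin{equation*}
|c_{\alpha}(z,h)| \geq \delta - \CO(h) \geq \delta/2,
\end{equation*}
uniformly in $\alpha\in\N^{n}$ for $h$ small enough. Thus the prefactor appearing in front of $\partial_{x}^{\alpha}a(0,h)$ in the recursive identity is uniformly bounded away from $0$, and we may divide by it.

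The induction runs as follows. For $|\alpha|=0$, evaluate \eqref{a65} at $x=0$; the terms involving $\SF(x^{2})$ and $\SF(x)$ drop out, leaving $c_{0}(z,h)\,a(0,h)\in S(h^{-d_{0}})$, whence $a(0,h)=\CO(h^{-d_{0}})$ using the lower bound above. For the inductive step, assume $\partial_{x}^{\beta}a(0,h)=\CO(h^{-d_{0}})$ for every $|\beta|\leq |\alpha|-1$. Evaluating the $\alpha$-th differentiated equation at $x=0$, the terms $L x + \SF(x^{2})$ and $\SF(x)$ again vanish at the origin, so only $\partial_{x}^{\beta} a(0,h)$ for $|\beta|\leq |\alpha|-1$ remain on the right-hand side, yielding
\begin{equation*}
c_{\alpha}(z,h)\,\partial_{x}^{\alpha} a(0,h) = \sum_{|\beta|\leq |\alpha|-1} \CO\big(\partial_{x}^{\beta} a(0,h)\big) + \CO(h^{-d_{0}}) = \CO(h^{-d_{0}}).
\end{equation*}
Dividing by $c_{\alpha}(z,h)$ gives $\partial_{x}^{\alpha}a(0,h)=\CO(h^{-d_{0}})$, closing the induction.

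The only subtle point is the uniformity of the lower bound on $|c_{\alpha}|$ as $|\alpha|\to\infty$, but this is automatic: for $|\alpha|$ large the real part $\sum_{j}\lambda_{j}(1/2+\alpha_{j})$ dominates $|z|/h+|p_{1}(0,0)|$, so $|c_{\alpha}|\to\infty$, while for $|\alpha|$ in any bounded range the distance hypothesis supplies the bound $\delta/2$ directly. No additional ingredient beyond \eqref{a65}, its $\alpha$-fold differentiation, and the separation $\dist(z,\Gamma_{0}(h))\geq \delta h$ is needed.
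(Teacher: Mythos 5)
Your proof is correct and follows essentially the same route the paper takes: differentiate the transport equation \eqref{a65} $\alpha$ times, evaluate at $x=0$, and use the separation $\dist(z,\Gamma_0(h))\geq\delta h$ to bound $|c_\alpha|$ from below so that the displayed recursive identity closes by induction over $|\alpha|$. (Your intermediate sign is off — $-ih\,c_\alpha(z,h)$ equals $-\big(z - z_\alpha^{\rm app}\big)$ rather than $z - z_\alpha^{\rm app}$ — but this has no effect since the argument only uses the modulus.)
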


We now show that $a$ is small in $W_{1}$ seeing \eqref{a31} as a propagation equation. Combining \eqref{a36} and \eqref{a31}, we get
\begin{equation*}
\partial_{t} a ( x^{+} ( t , x ) , h ) = G ( x^{+} ( t , x ) , h ) a ( x^{+} ( t , x ) , h ) + \CO ( h^{- d_{0}} ) ,
\end{equation*}
where
\begin{equation*}
G ( x , h ) = - \frac{1}{2} \divergence_{x} \big( \partial_{\xi} p_{0} ( x , \nabla \varphi_{+} ( x ) ) \big) - i p_{1} ( x , \nabla \varphi_{+} ( x ) ) + i \frac{z}{h} .
\end{equation*}
It implies
\begin{equation*}
\partial_{t} \Big( e^{- \int_{0}^{t} G ( x^{+} ( s , x ) , h ) \, d s} a ( x^{+} ( t , x ) , h ) \Big) = \CO ( h^{- d_{0}} ) e^{- \int_{0}^{t} G ( x^{+} ( s , x ) , h ) \, d s} ,
\end{equation*}
and then
\begin{equation}
a ( x , h ) = e^{- \int_{0}^{t} G ( x^{+} ( s , x ) , h ) \, d s} a ( x^{+} ( t , x ) , h ) + \int_{t}^{0} \CO ( h^{- d_{0}} ) e^{- \int_{0}^{s} G ( x^{+} ( u , x ) , h ) \, d u} d s ,
\end{equation}
for all $x \in W_{1}$ and $t \leq 0$. Note that $x \in W_{1}$ and $t \leq 0$ ensure that $x^{+} ( t , x ) \in W_{1}$. Moreover, there exists $C > 0$ such that $\vert G ( x , h ) \vert \leq C$ for all $x \in W_{1}$ and $h \in ] 0 , 1 ]$. Thus, the previous equation yields
\begin{equation} \label{a34}
\vert a ( x , h ) \vert \leq e^{C \vert t \vert} \vert a ( x^{+} ( t , x ) , h ) \vert + \CO ( h^{- d_{0}} ) e^{C \vert t \vert} ,
\end{equation}
for all $x \in W_{1}$ and $t \leq 0$. On the other hand, the Taylor formula, $a \in S ( h^{- d_{0} - 1} )$ and Lemma \ref{a32} give
\begin{equation*}
a ( y , h ) = \CO_{K} ( h^{- d_{0}} ) + \CO_{K} ( h^{- d_{0} - 1} \vert y \vert^{K} ) ,
\end{equation*}
for all $K \in \N$ and $y \in W_{1}$. Recall that $\CO_{K} ( 1 )$ designs a function which is bounded by a constant which may depend on the parameter $K$. Combining with \eqref{a33} and \eqref{a34}, we deduce
\begin{equation}
\vert a ( x , h ) \vert \lesssim \CO_{K} \big( h^{- d_{0} - 1} e^{C \vert t \vert} e^{- c_{0} K \vert t \vert} + h^{- d_{0}} e^{C \vert t \vert} \big) ,
\end{equation}
for all $t \leq 0$ and $K \in \N$. Taking
\begin{equation*}
K \geq \frac{2 C}{c_{0}} \qquad \text{and} \qquad t = \frac{\ln h}{2 C} ,
\end{equation*}
we obtain $\vert a ( x , h ) \vert \lesssim h^{- d_{0} - 1 / 2}$ for all $x \in W_{1}$. Eventually, the Landau--Kolmogorov inequalities (see e.g. Ditzian \cite{Di89_01}) imply $a \in S ( h^{- d_{0} - 1 + 1 / 4} )$ in $W_{2}$. In other words, starting from $a \in S ( h^{- d_{0} - 1} )$ in $W_{1}$, we have just proved that $a \in S ( h^{- d_{0} - 1 + 1 / 4} )$ in $W_{2}$. By iteration, we deduce $a \in S ( h^{- d_{0} - 1 + m / 4} )$ in $W_{m + 1}$. Then,

\begin{lemma}\sl \label{a35}
We have $a \in S ( h^{\infty} )$ in $W_{\infty}$.
\end{lemma}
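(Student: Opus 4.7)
The plan is to prove by induction on $m \in \N$ that
\begin{equation*}
a \in S \big( h^{- d_{0} - 1 + m / 4} \big) \text{ in } W_{m + 1} .
\end{equation*}
The cases $m = 0$ and $m = 1$ have already been established in Lemma \ref{a28} and in the paragraph preceding the lemma. Granting the induction, since $W_{\infty} \Subset W_{m + 1}$ for every $m \in \N$, one obtains $a \in S ( h^{- d_{0} - 1 + m / 4} )$ on $W_{\infty}$ for every $m$, which is exactly $a \in S ( h^{\infty} )$ in $W_{\infty}$.

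For the induction step I would reproduce verbatim the three ingredients used to pass from $W_{1}$ to $W_{2}$, but starting from the improved input $a \in S ( h^{- d_{0} - 1 + ( m - 1 ) / 4} )$ on $W_{m}$. First, the transport equation \eqref{a65} is really a consequence of the semiclassical calculus for a pseudodifferential operator acting on a Lagrangian distribution, and the RHS term $S ( h^{- d_{0}} )$ in \eqref{a31} comes precisely from the $\CO ( h^{2} )$-subprincipal error applied to $a$, which is $a$ divided by one power of $h$ extracted from the transport operator; hence, upgrading $a$ by a factor $h^{( m - 1 ) / 4}$ upgrades the RHS to $S ( h^{- d_{0} + ( m - 1 ) / 4} )$. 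The same recurrence on $\vert \alpha \vert$ as in Lemma \ref{a32} then gives the improved germ estimate
\begin{equation*}
\partial_{x}^{\alpha} a ( 0 , h ) = \CO \big( h^{- d_{0} + ( m - 1 ) / 4} \big) ,
\end{equation*}
for every $\alpha \in \N^{n}$, using that $z$ stays at distance $\geq \delta h$ from $\Gamma_{0} ( h )$.

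Second, I would re-run the Gronwall argument along the flow $x^{+} ( t , x )$ on $W_{m}$, using the contraction \eqref{a33} and the updated Taylor expansion at $0$
\begin{equation*}
a ( y , h ) = \CO_{K} \big( h^{- d_{0} + ( m - 1 ) / 4} \big) + \CO_{K} \big( h^{- d_{0} - 1 + ( m - 1 ) / 4} \vert y \vert^{K} \big) .
\end{equation*}
Choosing $K \geq 2 C / c_{0}$ and $t = \ln h / ( 2 C )$ exactly as in the paragraph containing \eqref{a34} yields the pointwise bound
\begin{equation*}
\vert a ( x , h ) \vert \lesssim h^{- d_{0} - 1 / 2 + ( m - 1 ) / 4} ,
\end{equation*}
uniformly for $x \in W_{m}$. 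Third, interpolating between this new sup-norm bound and the derivative bounds coming from $a \in S ( h^{- d_{0} - 1 + ( m - 1 ) / 4} )$ on $W_{m}$ by the Landau--Kolmogorov inequalities (as invoked in the paper) yields the $1 / 4$-gain on all derivatives, at the cost of shrinking the domain from $W_{m}$ to $W_{m + 1}$: this gives $a \in S ( h^{- d_{0} - 1 + m / 4} )$ on $W_{m + 1}$ and closes the induction.

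The one delicate point, and the main place where care is needed, is the first step: verifying that the RHS of \eqref{a31} really improves by the full inductive gain $h^{( m - 1 ) / 4}$ rather than stagnating at $S ( h^{- d_{0}} )$. This relies on the fact that the whole transport expansion, obtained by grouping the Weyl symbol of $P - z$ acting on $a \, e^{i \varphi_{+} / h}$ in powers of $h$, depends linearly on $a$ and on its derivatives, all of which improve by the same inductive factor; the remainder is itself $\CO ( h^{\infty} )$ times $a$ from the microlocal equation $( P - z ) w = \CO ( h^{\infty} )$. Once this linearity is made explicit, the induction runs as described and the lemma follows.
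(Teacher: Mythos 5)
Your proposal is correct and matches the paper's intended argument: the paper does the step from $W_1$ to $W_2$ in full and then says ``By iteration, we deduce $a\in S(h^{-d_0-1+m/4})$ in $W_{m+1}$,'' and your induction unpacks exactly that iteration. You also correctly identify the only non-automatic point, namely that the $S(h^{-d_0})$ right-hand side of \eqref{a31} (and hence the germ estimate of Lemma \ref{a32}) improves along with $a$, because this error is $h$ times a linear differential expression in $a$ and its derivatives.
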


Let $V$ be a closed neighborhood of $( 0 , 0 )$ such that $\pi_{x} ( V ) \subset W_{\infty}$ and $\chi_{6} ( x ) \psi_{6} ( x , \xi ) = 1$ near $V$. From \eqref{a30} and Lemma \ref{a28}, we have $u = a e^{i \varphi_{+} / h}$ microlocally near $V$. Eventually, Lemma \ref{a35} shows that $u = 0$ microlocally near $V$. This ends the proof of Theorem \ref{a1}.

\section{Proof of the asymptotic of resonances} \label{s4}

In order to prove Theorem \ref{a37}, we follow the general strategy to get the spectral asymptotic explained in Section 1.2 of \cite{BoFuRaZe16_01}. First, we show that $P$ has no resonance away from the pseudo-resonances. Then, we prove that $P$ has a resonance close to each pseudo-resonance.

\begin{lemma}\sl \label{a39}
Let $C , \varepsilon > 0$. For $h$ small enough, $P$ has no resonance in $B ( E_{0} , C h ) \setminus ( \res_{0} ( P ) + B ( 0 , \varepsilon h ) )$. Moreover, there exists $N > 0$ such that
\begin{equation*}
\big\Vert( P_{\theta} -z )^{-1} \big\Vert \leq h^{- N} ,
\end{equation*}
uniformly for $h$ small enough and $z \in B ( E_{0} , C h ) \setminus ( \res_{0} ( P ) + B ( 0 , \varepsilon h ) )$.
\end{lemma}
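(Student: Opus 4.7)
My plan is to follow the general strategy from Section 1.2 of \cite{BoFuRaZe16_01}: reduce the resolvent estimate to a microlocal Cauchy problem at the trapped set, then invoke Theorem \ref{a1} to close it. Fix the distortion angle $\theta = h\vert \ln h\vert$, so that $\res(P) \cap \CE_\theta$ coincides with the eigenvalues of $P_\theta$ in $\CE_\theta \cap B(E_0,Ch)$. By Proposition D.1 of \cite{BoFuRaZe16_01}, the cut-off resolvent bound \eqref{a38} is equivalent to a polynomial bound on $(P_\theta-z)^{-1}$, and the absence of resonances in a set follows at once from such a bound on that set.

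Assume, for contradiction, that for some sequence $h_k \to 0$ and $z_k \in B(E_0,Ch_k) \setminus (\res_0(P)+B(0,\varepsilon h_k))$, either $z_k$ is a resonance, or $\Vert (P_\theta - z_k)^{-1}\Vert \geq h_k^{-N_k}$ with $N_k\to \infty$. In both cases, a standard normal-family argument (see e.g.\ Section 5 of \cite{BoFuRaZe07_01}) produces normalized $u_k\in L^2(\R^n)$ with $\Vert u_k\Vert=1$ and $(P_\theta-z_k)u_k = v_k$, with $v_k = o(h_k^{N_k})$ for any $N_k$. Outside any fixed neighborhood of $(0,0)$, the characteristic set $p_0^{-1}(E_0)$ is non-trapping, so the standard non-trapping resolvent bound for the distorted operator, together with propagation of singularities from the dissipative region at infinity, gives $u_k = \CO(h_k^\infty)$ microlocally in the incoming region $\SA_-$ of Section \ref{s3}, and $(P-z_k)u_k = \CO(h_k^\infty)$ microlocally in a neighborhood $\Omega$ of $(0,0)$ where the distortion is trivial.

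At this point the microlocal Cauchy problem \eqref{a69} is set up with $z=z_k$ satisfying $\vert z_k-E_0\vert \leq Ch_k$ and $\mathrm{dist}(z_k-E_0,\Gamma_0(h_k))\geq \varepsilon h_k$, since the translation $E_0$ identifies $\res_0(P)$ with $E_0+\Gamma_0(h)$ (there is no subprincipal symbol in the Schrödinger case). Theorem \ref{a1} then forces $u_k = 0$ microlocally near a neighborhood $V$ of $(0,0)$. Combined with the non-trapping microlocal estimates away from $(0,0)$ and the dissipativity of $P_\theta$ near infinity, this yields $\Vert u_k\Vert_{L^2} = o(1)$, contradicting $\Vert u_k\Vert=1$. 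This proves both the absence of resonances in $B(E_0,Ch)\setminus(\res_0(P)+B(0,\varepsilon h))$ and the polynomial bound $\Vert(P_\theta-z)^{-1}\Vert\leq h^{-N}$ on the same region.

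The main obstacle is the ``gluing'' step: carefully verifying that one can feed the hypotheses of Theorem \ref{a1} from the non-trapping/dissipative behavior at infinity, i.e.\ that the microlocal vanishing on $U$ (neighborhood of $S_-^\varepsilon$) really follows from propagation backwards along bicharacteristics from the distorted region. This is where one needs the black-box-style machinery and the reduction performed in Section 5 of \cite{BoFuRaZe07_01} (and Section 8 of \cite{BoFuRaZe16_01}); however, those results are already set up for precisely this purpose and apply verbatim once the trapped-set uniqueness statement (now supplied by Theorem \ref{a1}) is available at distance $\delta h$ from $\Gamma_0(h)$ rather than the weaker $h^N$ threshold used before.
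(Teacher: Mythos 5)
Your proof is correct and follows the same route as the paper: reduce the resolvent estimate to the uniqueness of the microlocal Cauchy problem at the hyperbolic fixed point (via Section 8 of \cite{BoFuRaZe16_01} and Proposition D.1 there), then invoke Theorem \ref{a1} with $\Gamma_0(h) = \res_0(P) - E_0$ since there is no subprincipal symbol. The paper states this reduction in one sentence; you flesh out the contradiction/normal-family argument and the gluing step, but the content and the key ingredient (Theorem \ref{a1}) are identical.
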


\begin{proof}
From Section 8 of \cite{BoFuRaZe16_01}, it is enough to show that any polynomially bounded solution $u$ of the equation
\begin{equation*}
\left\{ \begin{aligned}
&( P - z ) u = 0 &&\text{ microlocally near } K ( E_{0} ) ,   \\
&u = 0 &&\text{ microlocally in the incoming region} ,
\end{aligned} \right.
\end{equation*}
vanishes microlocally near $K ( E_{0} )$. Thus, this lemma is a direct consequence of the propagation of singularities of Theorem \ref{a1}.
\end{proof}

We now show that there exists at least one resonance near each pseudo-resonance. This is the aim of the following lemma which, combined with Lemma \ref{a39}, implies Theorem \ref{a37}. Let $( \mu_{k} )_{k \in \N}$ be the strictly increasing sequence of linear combinations over $\N$ of the $\lambda_j$'s. In particular, $\mu_{0} = 0$, $\mu_{1} = \lambda_{1}$ and $\mu_{k} \to + \infty$ as $k \to + \infty$.

\begin{lemma}\sl \label{a40}
Let $\nu \in \{ \mu_{k} ; \ k \in \N \}$ and $\varepsilon > 0$. For $h$ small enough, $P$ has at least one resonance in $B ( z_{\nu} , \varepsilon h )$ with $z_{\nu} = E_{0} - i h \nu - i h \sum \lambda_{j} / 2 \in \res_{0} ( P )$.
\end{lemma}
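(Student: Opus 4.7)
The strategy, standard once Lemma \ref{a39} is in hand, is to construct a quasi-mode at $z_{\nu}$ and combine it with the polynomial resolvent bound to force a resonance nearby. Choose a multi-index $\alpha \in \N^{n}$ with $\sum_{j} \lambda_{j} \alpha_{j} = \nu$, which exists by the definition of $\nu$ as an element of $\{ \mu_{k} \}$. Following the WKB construction of \cite{BoFuRaZe11_01}, I would build a test function of the form
\begin{equation*}
u_{h} ( x ) = \chi ( x ) \, a ( x , h ) \, e^{i \varphi_{+} ( x ) / h} ,
\end{equation*}
with $\chi \in C^{\infty}_{0} ( \R^{n} )$ equal to one near $0$ and supported in a small neighborhood of $0$, and $a ( x , h ) \sim \sum_{j \geq 0} h^{j} a_{j} ( x )$. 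The leading symbol $a_{0}$ solves near $0$ the principal transport equation obtained as the $h \to 0$ limit of \eqref{a67} at $z = z_{\nu}$, whose indicial equation in polynomial data is
\begin{equation*}
\sum_{j} \lambda_{j} x_{j} \partial_{x_{j}} a_{0} + \Big( \tfrac{1}{2} \sum_{j} \lambda_{j} - \nu \Big) a_{0} = 0 ;
\end{equation*}
the monomial $a_{0} ( x ) = x^{\alpha}$ (plus higher order smooth corrections) is a solution. The subsequent $a_{j}$'s are determined inductively by solving the successive transport equations; each step is solvable precisely because $z_{\nu} \in \res_{0} ( P )$, i.e.\ because $\nu \in \{ \mu_{k} \}$.

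After Borel summation in $h$ and normalization, this yields $u_{h} \in C^{\infty}_{0} ( \R^{n} )$ supported in an $h$-independent neighborhood of $0$, with $\Vert u_{h} \Vert_{L^{2}} \gtrsim 1$ and $\Vert ( P - z_{\nu} ) u_{h} \Vert_{L^{2}} = \CO ( h^{\infty} )$. Since $u_{h}$ is supported in a fixed compact region where $V$ is $C^{\infty}$ and where the analytic distortion acts as the identity, we have $P_{\theta} u_{h} = P u_{h}$, so the same estimate holds with $P_{\theta}$ in place of $P$.

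Now argue by contradiction. Since $\{ \mu_{k} \}$ is discrete, one can fix $\varepsilon^{\prime} \in ( 0 , \varepsilon )$ so small that $\res_{0} ( P ) \cap B ( z_{\nu} , 2 \varepsilon^{\prime} h ) = \{ z_{\nu} \}$. Suppose $P$ has no resonance in $B ( z_{\nu} , \varepsilon h )$; then in particular $( P_{\theta} - z )^{- 1}$ is holomorphic on $B ( z_{\nu} , 2 \varepsilon^{\prime} h )$, and by Lemma \ref{a39} applied with the parameter $\varepsilon^{\prime}$, one has $\Vert ( P_{\theta} - z )^{- 1} \Vert \leq h^{- N}$ on $\partial B ( z_{\nu} , 2 \varepsilon^{\prime} h )$. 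By the maximum principle applied to the scalar holomorphic functions $z \mapsto \langle ( P_{\theta} - z )^{- 1} f , g \rangle$, this bound extends to the whole disk $B ( z_{\nu} , 2 \varepsilon^{\prime} h )$. Evaluating at $z = z_{\nu}$,
\begin{equation*}
1 \lesssim \Vert u_{h} \Vert_{L^{2}} = \big\Vert ( P_{\theta} - z_{\nu} )^{- 1} ( P_{\theta} - z_{\nu} ) u_{h} \big\Vert_{L^{2}} \leq h^{- N} \cdot \CO ( h^{\infty} ) = \CO ( h^{\infty} ) ,
\end{equation*}
a contradiction for $h$ small enough.

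The genuine work lies in the quasi-mode construction of the first step; everything else is the routine ``maximum principle on the resolvent'' argument. The obstacle there is to ensure that the transport equation \eqref{a67}, together with its higher-order corrections, can be solved inductively in $S ( h^{\infty} )$ starting from $a_{0} = x^{\alpha}$. This is essentially a non-resonance statement for the linearized vector field $L x \cdot \nabla + \tfrac{1}{2} \sum_{j} \lambda_{j}$ acting on formal power series, and it is precisely the condition that the successive eigenvalues $\sum_{j} \lambda_{j} ( \beta_{j} + 1/2 )$ avoid $-i z_{\nu} / h$ for $\beta$ not giving back the same pseudo-resonance level $\nu$; since only $C^{\infty}$ data of $V$ near the maximum enter the construction, the analyticity hypothesis is irrelevant here, and the construction in \cite{BoFuRaZe11_01} can be reused with essentially no modification.
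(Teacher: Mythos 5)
The proposal has a genuine gap at its core step: the claim that the cut-off outgoing quasi-mode $u_h = \chi\, a\, e^{i\varphi_+/h}$ satisfies $\Vert (P - z_\nu) u_h \Vert_{L^2} = \CO(h^\infty)$ is false, and cannot be repaired within this strategy. Since $\Lambda_+$ is the \emph{outgoing} manifold, any symbol $a$ solving the transport hierarchy along $\Lambda_+$ with initial germ $x^\alpha$ at the origin propagates outward without decay; one cannot choose $a$ both compactly supported and solving the transport equations. Inserting a cut-off $\chi$ therefore produces
\begin{equation*}
(P - z_\nu)\big(\chi\, a\, e^{i\varphi_+/h}\big) = \chi\,(P - z_\nu)\big(a\, e^{i\varphi_+/h}\big) + [P,\chi]\big(a\, e^{i\varphi_+/h}\big),
\end{equation*}
and the commutator term, supported on $\operatorname{supp}\nabla\chi$ where $\vert a\, e^{i\varphi_+/h}\vert \sim 1$, is only $\CO(h)$ in $L^2$ (its leading part is $-2h\,\nabla\chi\cdot\nabla\varphi_+\, a\, e^{i\varphi_+/h}$). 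This $\CO(h)$ error overwhelms the $h^{-N}$ resolvent bound from Lemma \ref{a39}, so the final inequality ``$1\lesssim h^{-N}\cdot\CO(h^\infty)$'' cannot be reached. This is precisely the difference between barrier-top and shape resonances: for the latter the quasi-mode decays exponentially in the barrier, which is why the cut-off-quasi-mode/maximum-principle scheme works there, whereas at a barrier top the resonant state is a genuinely outgoing, non-decaying Lagrangian distribution.

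The paper therefore proceeds differently: it builds a compactly supported forcing $v = [P,\chi]u$ from an \emph{incoming} WKB state $u$ (carried by a Lagrangian transversal to $\Lambda_-$), writes $w = (P_\theta - z)^{-1}v$, and then uses the representation formula for $w$ near the hyperbolic fixed point (Theorem 5.1 of \cite{BoFuRaZe07_01}) to exhibit $w$ near $(0,0)$ as an outgoing Lagrangian distribution whose symbol has first-order poles in $\sigma = (z-E_0)/h$ at $\sigma_\nu$. Contour integration $\oint_{\partial B(z_\nu,\varepsilon h)}(P_\theta - z)^{-1}v\,dz$ then picks out a non-vanishing residue (this is where the careful choice of the trajectory $\gamma$ and of $\alpha^0 \in Q_{\mathrm{max}}$ enters, in order to guarantee that the leading coefficient $f_0$ in \eqref{a55} is not identically zero), which is incompatible with $(P_\theta - z)^{-1}$ being holomorphic in $B(z_\nu, \varepsilon h)$. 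In short, the transport-equation and non-resonance computations you describe are in the right spirit (they reappear in the paper's treatment of $A_-$), but they must be read off from the resolvent applied to an incoming test function rather than from a cut-off outgoing quasi-mode; the latter simply does not give a small enough remainder at a barrier top.
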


When $\nu$ is simple (i.e. can be written in a unique way as $\nu = \sum \alpha_{j} \lambda_{j}$), this result is a direct consequence of the proof of Theorem 4.1 of \cite{BoFuRaZe11_01} (see more precisely (4.23) and (4.24) of this paper). In the general case, we follow the same strategy: we construct a ``test function'' $v$ and prove that
\begin{equation*}
\oint_{\partial B ( z_{\nu} , \varepsilon h )} ( P_{\theta} - z )^{- 1} v \, d z \neq 0 ,
\end{equation*}
showing that the resolvent $( P_{\theta} - z )^{- 1}$ can not be analytic near $z_{\nu}$. As explained in the introduction of \cite{BoFuRaZe16_01}, this approach can be systematized in spectral theory.

\begin{proof}
We decompose the proof of Lemma \ref{a40} into $5$ steps. We use some intermediate results of Section 6 of \cite{AlBoRa08_01} and of Section 4 and Appendix A of \cite{BoFuRaZe11_01}.

\underline{1st step:} construction of a ``test curve''. Let
\begin{equation*}
Q = \{ ( \alpha_{\mu_{k}} )_{k \geq 1} \in \N^{\N^{*}} ; \ \alpha \cdot \mu = \nu \} \qquad \text{with} \qquad \alpha \cdot \mu = \sum_{k = 1}^{+ \infty} \alpha_{\mu_{k}} \mu_{k} .
\end{equation*}
The set $Q$ is finite and not empty. Thus, we define
\begin{equation} \label{a45}
q_{0} = \max_{\alpha \in Q} \vert \alpha \vert \qquad \text{and} \qquad Q_{\text{max}} = \{ \alpha \in Q ; \ \vert \alpha \vert = q_{0} \} \neq \emptyset ,
\end{equation}
where $\vert \alpha \vert = \alpha_{\mu_{1}} + \alpha_{\mu_{2}} + \cdots$. Note that, if $\alpha_{\mu_{k}} \neq 0$ for some $\alpha \in Q_{\text{max}}$, then $\mu_{k}$ is one of the $\lambda_{j}$'s and can not be decomposed. (i.e. $\mu_{k} = \beta \cdot \lambda$ implies $\vert \beta \vert = 1$). We now fix $\alpha^{0} \in Q_{\text{max}}$ such that $\alpha^{0}_{\mu_{1}} \neq 0$ if it is possible (that is if the set $Q_{\text{max}}$ contains an element $\alpha$ with $\alpha_{\mu_{1}} \neq 0$).

Let $F_{p}$ be the linearization at $( 0 , 0 )$ of the Hamiltonian vector field $H_{p}$, given by
\begin{equation*}
F_{p} =
\left( \begin{array}{cc}
0 & 2 I d \\
\frac{1}{2} \diag ( \lambda_{1}^{2} , \ldots , \lambda_{n}^{2} ) & 0
\end{array} \right) .
\end{equation*}
It is diagonalizable and has eigenvalues $- \lambda_{n} , \dots , - \lambda_{1} , \lambda_{1} , \dots , \lambda_{n}$. We denote by $\Pi_{\lambda}$ the spectral projection on the eigenspace of $F_{p}$ associated to $- \lambda$. From Section 3 of Helffer and Sj\"{o}strand \cite{HeSj85_01}, any Hamiltonian trajectory $\gamma ( t )$ in $\Lambda_{-}$ verifies
\begin{equation} \label{a41}
\gamma ( t ) \simeq \sum_{k = 1}^{+ \infty} \gamma_{\mu_{k}}^{-} ( t ) e^{- \mu_{k} t} \qquad \text{with} \qquad \gamma_{\mu_{k}}^{-} ( t ) = \sum_{m = 0}^{M_{\mu_{k}}} \gamma_{\mu_{k} , m}^{-} t^{m} ,
\end{equation}
as $t \to + \infty$ in the sense of expandible functions. See \cite{HeSj85_01} for the precise definition of expandible, which roughly speaking means that one can derivate term by term these asymptotic expansions. Furthermore, Lemma A.1 of \cite{BoFuRaZe11_01} shows that, if $\lambda_{j}$ can not be decomposed, we have $M_{\lambda_{j}} = 0$ and $\gamma_{\lambda_{j} , 0}^{-} \in \ker ( F_{p} + \lambda_{j} )$. For all $\lambda_{j}$, we choose $\widetilde{\gamma}_{\lambda_{j} , 0}^{-} \in \ker ( F_{p} + \lambda_{j} )$ with
\begin{equation} \label{a46}
\widetilde{\gamma}_{\lambda_{j} , 0}^{-} \left\{ \begin{aligned}
&\neq 0 &&\text{ if } \alpha_{\lambda_{j}}^{0} \neq 0 \text{ or } \lambda_{j} = \lambda_{1} ,  \\
&= 0 &&\text{ otherwise} .
\end{aligned} \right.
\end{equation}
From Proposition A.3 of \cite{BoFuRaZe11_01}, there exists a Hamiltonian trajectory $\gamma ( t ) = ( x ( t ) , \xi ( t ) ) \in \Lambda_{-}$ such that
\begin{equation*}
\forall \lambda_{j} , \qquad \Pi_{\lambda_{j}} ( \gamma^{-}_{\lambda_{j},0} ) = \widetilde{\gamma}^{-}_{\lambda_{j},0} .
\end{equation*}

\underline{2nd step:} construction of a ``test function''. We apply the WKB construction made in Section 4.1 of \cite{BoFuRaZe11_01}. No change has to be made in the present situation. Then, there exists a function
\begin{equation*}
u ( x , z , h ) = b ( x , z , h ) e^{i \psi ( x ) / h} ,
\end{equation*}
satisfying the following properties. First, the smooth phase function $\psi$ solves the eikonal equation $\vert \nabla \psi \vert^{2} + V ( x ) = E_{0}$ and the manifold $\Lambda_{\psi} = \{ ( x , \nabla \psi ( x ) ) \}$ intersects transversally $\Lambda_{-}$ along $\gamma$. The symbol $b \in S ( 1 )$ is supported near the $x$-space projection of $\gamma$ and has an asymptotic expansion
\begin{equation*}
b ( x , z , h ) \simeq \sum_{j = 0}^{\infty} b_{j} ( x , z ) h^{j} ,
\end{equation*}
uniformly for $z \in B ( z_{\nu} , 2 \varepsilon h )$. Furthermore, $b$ and the $b_{j}$'s are holomorphic for $z \in B ( z_{\nu} , 2 \varepsilon h )$. For some $T > 0$ large enough, the principal symbol of $b$ at $x ( T )$ is independent of $z$ and non-zero (i.e. $b_{0} ( x ( T ) , z ) = b_{0} ( x ( T ) ) \neq 0$). Eventually, $u$ satisfies
\begin{equation}
( P - z ) u = \CO ( h^{\infty} ) ,
\end{equation}
near $x ( [ T , + \infty [ )$.

Our test function is defined as
\begin{equation}
v = [ P , \chi ] u ,
\end{equation}
where $\chi \in C^{\infty}_{0} ( \R^{n} ; [ 0 , 1 ] )$ with $\chi = 1$ near $0$ and $\supp \chi$ sufficiently close to $0$. In particular, $v$ is holomorphic in $B ( z_{\nu} , 2 \varepsilon h )$. We set
\begin{equation} \label{a52}
w = ( P_{\theta} - z )^{- 1} v ,
\end{equation}
for $z \in \CR = B ( z_{\nu} , 2 \varepsilon h ) \setminus B ( z_{\nu} , \varepsilon h / 2 )$. From Lemma \ref{a39}, this function is well-defined and holomorphic in $\CR$. Moreover, it satisfies $\Vert w \Vert \lesssim h^{- N}$ uniformly for $z \in \CR$.

\underline{3rd step:} representation of $w$. We first describe $w$ in the incoming region. For $\rho \in \Lambda_{-} \setminus \{ ( 0 , 0 ) \}$ sufficiently close to $( 0 , 0 )$, we have
\begin{equation*}
w = \left\{ \begin{aligned}
&u &&\text{ microlocally near } \rho \text{ if } \rho \in \gamma ,   \\
&0 &&\text{ microlocally near } \rho \text{ if } \rho \notin \gamma .
\end{aligned} \right.
\end{equation*}
This follows from Remark 4.4 and Lemma 4.5 of our previous work \cite{BoFuRaZe11_01}.

We now give a representation formula for $w$ near $( 0 , 0 )$. For that, we follow Section 4.3 of \cite{BoFuRaZe11_01} which rests on Section 5 of \cite{BoFuRaZe07_01}. Let
\begin{equation*}
\sigma = \frac{z - E_{0}}{h} \qquad \text{and} \qquad \sigma_{\nu} = \frac{z_{\nu} - E_{0}}{h} ,
\end{equation*}
be the rescaled spectral parameters. Then, Theorem 5.1 of \cite{BoFuRaZe07_01} shows that
\begin{equation} \label{a42}
w = \frac{1}{\sqrt{2 \pi h}} e^{i \varphi_{+} ( x ) / h} e^{i \psi ( 0 ) / h} A_{-} ( x , \sigma , h ) + \frac{1}{\sqrt{2 \pi h}} \int_{- 1}^{+ \infty} e^{i \varphi ( t , x ) / h} A_{+} ( t , x , \sigma ,h ) \, d t ,
\end{equation}
microlocally near $(0,0)$. The symbol $A_{+}$ is a holomorphic function of $\sigma \in B ( \sigma_{\nu} , 2 \varepsilon )$ which decays exponentially in $t$ uniformly with respect to $x, \sigma ,h$. The constant $\psi ( 0 )$ is defined by
\begin{equation*}
\psi ( 0 ) = \lim_{t \to + \infty} \psi ( x ( t ) ) \in \R .
\end{equation*}

The symbol $A_{-} ( x , \sigma , h ) \in S ( h^{- C} )$ is constructed as follows. First, there is an expandible symbol $a ( t , x , \sigma , h ) \in S ( 1 )$ of the form
\begin{equation*}
a ( t , x , \sigma , h ) \simeq \sum_{j = 0}^{+ \infty} a_{j} ( t , x , \sigma ) h^{j} ,
\end{equation*}
where the $a_{j}$'s satisfy
\begin{equation*}
a_{j} ( t , x , \sigma ) \simeq \sum_{k = 0}^{+ \infty} a_{j , \mu_{k}} ( t , x , \sigma ) e^{- ( S + \mu_{k} ) t} \qquad \text{and} \qquad a_{j , \mu_{k}} ( t , x , \sigma ) = \sum_{\ell = 0}^{M_{j , \mu_{k}}} a_{j , \mu_{k} , \ell} ( x , \sigma ) t^{\ell} .
\end{equation*}
In this expression, $S$ is defined by
\begin{equation*}
S = S ( \sigma ) = \sum_{j = 1}^{n} \frac{\lambda_{j}}{2} - i \sigma .
\end{equation*}
The symbols $a_{j} , a_{j , \mu_{k}} , a_{j , \mu_{k} ,\ell}$ are holomorphic for $\sigma \in B ( \sigma_{\nu} , 2 \varepsilon )$. Finally, $a_{0 , 0}$ does not depend on $t , \sigma$ and
\begin{equation} \label{a50}
a_{0,0} ( 0 ) = \vert g^{-}_{\lambda_{1}} \vert \lambda_{1}^{\frac{3}{2}} e^{- i \frac{\pi}{4}} e^{- \int_{T}^{+ \infty} \Delta \psi (x ( s ) ) - ( \sum \lambda_{j} /2 - \lambda_{1} ) \, d s} b_{0} ( x ( T ) ) ,
\end{equation}
with $g^{-}_{\lambda_{j}} = \pi_{x} ( \gamma_{\lambda_{j} , 0}^{-} )$. On the other hand, there exists an expandible function 
\begin{equation*}
\varphi_{\star} ( t , x ) \simeq \sum_{k = 1}^{+ \infty} \varphi_{\mu_{k}} ( t , x ) e^{- \mu_{k} t} \qquad \text{and} \qquad \varphi_{\mu_{k}} ( t , x ) = \sum_{\ell = 0}^{N_{\mu_{k}}} \varphi_{\mu_{k} , \ell} ( x ) t^{\ell} .
\end{equation*}
With the notations of \cite{BoFuRaZe07_01}, it is defined by $\varphi_{\star} ( t , x ) = \varphi ( t , x ) - ( \varphi_{+} (x) + \psi (0) )$. We consider the expandible symbol
\begin{equation} \label{a44}
\widetilde{a} = \sum_{q < Q_{1}} \frac{a}{q !} \Big( \frac{i \varphi_{\star}}{h} \Big)^{q} \simeq \sum_{j = 1 - Q_{1}}^{+ \infty} \widetilde{a}_{j} ( t , x , \sigma ) h^{j} ,
\end{equation}
for some $Q_{1} \in \N$ fixed large enough,
\begin{equation*}
\widetilde{a}_{j} ( t , x , \sigma ) \simeq \sum_{k = 0}^{+ \infty} a_{j , \mu_{k}} ( t , x , \sigma ) e^{- ( S + \mu_{k} ) t} \qquad \text{and} \qquad \widetilde{a}_{j , \mu_{k}} ( t , x , \sigma ) = \sum_{\ell = 0}^{\widetilde{M}_{j , \mu_{k}}} \widetilde{a}_{j , \mu_{k} , \ell} ( x , \sigma ) t^{\ell} .
\end{equation*}
Then, $A_{-} ( x , \sigma , h )$ is a symbol of class $S ( h^{1 - Q_{1}} )$, holomorphic with respect to $\sigma \in B ( \sigma_{\nu} , 2 \varepsilon ) \setminus B ( \sigma_{\nu} , \varepsilon / 2 )$, such that
\begin{equation}\label{a48}
A_{-} ( x , \sigma , h ) \simeq \sum_{j = 1 - Q_{1}}^{+ \infty} h^{j} \sum_{k = 0}^{K_{1}} \sum_{\ell = 0}^{\widetilde{M}_{j , \mu_{k}}} \frac{\ell !}{( S + \mu_{k})^{\ell + 1}} \widetilde{a}_{j, \mu_{k} , \ell} ( x , \sigma ) ,
\end{equation}
for some $K_{1} \in \N$ large enough.

In the sequel, we will use some additional informations on the $\varphi_{\mu_{k}}$'s. Assume that $\lambda_{j}$ can not be decomposed (see below \eqref{a45}). Working as in Section 4.3 of \cite{BoFuRaZe11_01} and Section 6.1 of \cite{AlBoRa08_01}, one can show that $\varphi_{\lambda_{j}}$ does not depend on $t$ (i.e. $N_{\lambda_{j}} = 0$) and that
\begin{equation} \label{a54}
\varphi_{\lambda_{j}} ( x ) = - \lambda_{j} g_{\lambda_{j}}^{-} \cdot x + \CO ( x^{2} ) .
\end{equation}
If $g_{\lambda_{j}}^{-} = 0$ in addition, then $\varphi_{\lambda_{j}} = 0$.

\underline{4th step:} integration with respect to $z$. We now compute the function
\begin{equation} \label{a53}
f ( x ) = \oint_{\partial B ( z_{\nu} , \varepsilon h )} w ( x , z ) \, d z = h \oint_{\partial B ( \sigma_{\nu} , \varepsilon )} w ( x , \sigma ) \, d \sigma ,
\end{equation}
microlocally near $( 0 , 0 )$. Since $A_{+} ( \sigma )$ is holomorphic in $B ( \sigma_{\nu} , 2 \varepsilon )$, the last term in \eqref{a42} gives no contribution to $f$. Thus, this function can be written
\begin{equation} \label{a43}
f ( x ) = \sqrt{\frac{h}{2 \pi}} e^{i \varphi_{+} ( x ) / h} e^{i \psi ( 0 ) / h} \oint_{\partial B ( \sigma_{\nu} , \varepsilon )} A_{-} ( x , \sigma , h ) \, d \sigma ,
\end{equation}
microlocally near $( 0 , 0 )$. The same way, the terms with $\mu_{k} \neq \nu$ in \eqref{a48} are holomorphic in $B ( \sigma_{\nu} , 2 \varepsilon )$ and do not contribute in $f ( x )$.

So, $f$ is given by the terms with $\mu_{k} = \nu$ in \eqref{a48}. Among these terms, those with the higher possible power of $h^{- 1}$ come from contributions with $q = q_{0}$ in \eqref{a44} and $a_{0} ( t ,x , \sigma ) = a_{0 , 0} ( x )$. Thus, we consider $\alpha \in Q_{\text{max}}$, that is a multi-index such that $\alpha \cdot \mu = \nu$ and $\vert \alpha \vert = q_{0}$. If $\alpha_{\mu_{k}} \neq 0$, then $\mu_{k}$ is one of the $\lambda_{j}$'s and can not be decomposed. Thus, \eqref{a54} implies that $\varphi_{\mu_{k}}$ is independent of $t$. Therefore, the contribution of $\alpha$ in $\widetilde{a}$ (see \eqref{a44}) is given by
\begin{equation} \label{a47}
\frac{a_{0 ,0} ( x )}{\alpha !} \Big( \frac{i}{h} \Big)^{\vert \alpha \vert} \prod_{k \geq 1} \big( \varphi_{\mu_{k}} ( x ) \big)^{\alpha_{\mu_{k}}} .
\end{equation}
If $\alpha \neq \alpha^{0}$, at least one of the $\varphi_{\mu_{k}}$ is null thanks to \eqref{a46} and the line below \eqref{a54}. Then, \eqref{a47} vanishes identically in that case.

Summing up, \eqref{a48} can be written
\begin{equation} \label{a49}
A_{-} ( x , \sigma , h ) \simeq \sum_{j = 0}^{+ \infty} a_{j}^{-} ( x , \sigma ) h^{- q_{0} + j} + \CH ( x , \sigma , h ) ,
\end{equation}
where the $a_{j}^{-}$'s are holomorphic with respect to $\sigma \in B ( \sigma_{\nu} , 2 \varepsilon ) \setminus B ( \sigma_{\nu} , \varepsilon / 2)$ and $C^{\infty}$ with respect to $x$. Moreover, the function $\CH$ is holomorphic in $\sigma \in B ( \sigma_{\nu} , 2 \varepsilon )$. Eventually,
\begin{equation*}
a_{0}^{-} ( x , \sigma ) = \frac{i^{q_{0} + 1}}{\alpha^{0} !} \frac{a_{0 , 0} ( x )}{\sigma - \sigma_{\nu}} \prod_{k \geq 1} \big( \varphi_{\mu_{k}} ( x ) \big)^{\alpha_{\mu_{k}}^{0}} .
\end{equation*}
Thus, \eqref{a43} and \eqref{a49} yield
\begin{equation} \label{a51}
f ( x ) \simeq e^{i \varphi_{+} ( x ) / h} \sum_{j = 0}^{+ \infty} f_{j} ( x ) h^{\frac{1}{2} - q_{0} + j} ,
\end{equation}
microlocally near $( 0 , 0 )$ with 
\begin{equation} \label{a55}
f_{0} ( x ) = - \frac{i^{q_{0}} \sqrt{2 \pi}}{\alpha^{0} !} e^{i \psi ( 0 ) / h} a_{0 , 0} ( x ) \prod_{k \geq 1} \big( \varphi_{\mu_{k}} ( x ) \big)^{\alpha_{\mu_{k}}^{0}} .
\end{equation}

\underline{5th step:} conclusion. The previous paragraph shows that, microlocally near $( 0 , 0 )$, $f ( x )$ is a Lagrangian distribution carried out by $\Lambda_{+}$ and of order $- q_{0} + 1 / 2$. Moreover, its principal symbol $f_{0}$ does not vanishes identically near $0$ thanks to \eqref{a55} together with \eqref{a46}, \eqref{a50} and \eqref{a54}. Therefore, $f$ is not the zero function.

Assume now that $P$ has no resonance in $B ( z_{\nu} , \varepsilon h )$ (and then in $B ( z_{\nu} , 2 \varepsilon h )$). Since $v ( z )$ is holomorphic in $B ( z_{\nu} , 2 \varepsilon h )$, so would be $w ( z )$ from \eqref{a52}. Then, \eqref{a53} would become
\begin{equation*}
f ( x ) = \oint_{\partial B ( z_{\nu} , \varepsilon h )} w ( x , z ) \, d z = 0 ,
\end{equation*}
and we get a contradiction. This proves that $P$ has at least one resonance in $B ( z_{\nu} , \varepsilon h )$ and the lemma follows.
\end{proof}

We end this section by giving the asymptotic modulo $\CO ( h^{\infty} )$ of the resonances near simple pseudo-resonances.

\begin{proof}[Proof of Proposition \ref{a58}]
From Theorem \ref{a37}, we already know that, for $\delta > 0$ small enough, all the resonances $z$ in $B ( z_{0} ( h ) , 2 \delta h )$ satisfy $z = z_{0} ( h ) + o ( h )$ and that there exists at least one such resonance. Then, to obtain the proposition, it is enough to construct $z_{\infty} ( h )$ and to show that, for all $N > 0$, $P$ has no resonance and a polynomial estimate of its truncated resolvent in $B ( z_{\infty} ( h ) , \delta h ) \setminus B ( z_{\infty} ( h ) , h^{N} )$ for $h$ small enough.

For that, it is enough to prove the uniqueness of the microlocal Cauchy problem at the barrier-top (see the proof of Lemma \ref{a39}). This last property is obtained following Section \ref{s3}. Then, we consider a function $u$ satisfying the assumptions of Theorem \ref{a1} with $z \in B ( z_{\infty} ( h ) , \delta h ) \setminus B ( z_{\infty} ( h ) , h^{N} )$. Applying Lemma \ref{a28}, we deduce that
\begin{equation*}
u ( x , h ) = a ( x , h ) e^{i \varphi_{+} ( x ) / h} ,
\end{equation*}
microlocally near $( 0 , 0 )$ for some $a \in S ( h^{- \mu} )$ with $\mu \in \R$. By assumption, there exists a unique $\alpha^{0} \in \N^{n}$ such that
\begin{equation*}
z_{0} ( h ) = E_{0} - i h \sum_{j = 1}^{n} \lambda_{j} \Big( \frac{1}{2} + \alpha_{j}^{0} \Big) .
\end{equation*}
In the sequel, we will also use the notations
\begin{equation*}
\sigma = \frac{z - E_{0}}{h} \qquad \text{and} \qquad \sigma_{0} = \frac{z_{0} ( h ) - E_{0}}{h} = - i \sum_{j = 1}^{n} \lambda_{j} \Big( \frac{1}{2} + \alpha_{j}^{0} \Big) .
\end{equation*}

Since $P$ is a Schr\"{o}dinger operator, the transport equation \eqref{a31} writes
\begin{equation} \label{a59}
2 \nabla \varphi_{+} ( x ) \cdot \nabla a ( x , h ) + \big( \Delta \varphi_{+} ( x ) - i \sigma \big) a ( x , h ) - i h \Delta a ( x , h ) \in S ( h^{\infty} ) ,
\end{equation}
in the present setting. As in \eqref{a65}, this equation can be expressed
\begin{equation*}
\big( L x + \SF ( x^{2} ) \big) \cdot \nabla a ( x , h ) + \Big( \sum_{j = 1}^{n} \frac{\lambda_{j}}{2} - i \sigma + \SF ( x ) \Big) a ( x , h ) - i h \Delta a ( x , h ) \in S ( h^{\infty} ) .
\end{equation*}
Applying $\partial_{x}^{\alpha}$ leads to
\begin{align*}
\big( L x + \SF ( & x^{2} ) \big) \cdot \nabla \partial_{x}^{\alpha} a ( x , h ) + \Big( \sum_{j = 1}^{n} \lambda_{j} \Big( \frac{1}{2} + \alpha_{j} \Big) - i \sigma \Big) \partial_{x}^{\alpha} a ( x , h )  \\
&+ \sum_{\vert \beta \vert = \vert \alpha \vert} \SF ( x ) \partial_{x}^{\beta} a ( x , h ) + \sum_{\vert \beta \vert \leq \vert \alpha \vert - 1} \SF ( 1 ) \partial_{x}^{\beta} a ( x , h ) - i h \Delta \partial_{x}^{\alpha} a ( x , h ) \in S ( h^{\infty} ) .
\end{align*}
Taking $x = 0$ and using the notation $a_{\alpha} = \partial_{x}^{\alpha} a ( 0 , h )$, it becomes
\begin{equation} \label{a60}
\Big( \sum_{j = 1}^{n} \lambda_{j} \Big( \frac{1}{2} + \alpha_{j} \Big) - i \sigma \Big) a_{\alpha} = \sum_{\vert \beta \vert \leq \vert \alpha \vert - 1} c_{\alpha}^{\beta} a_{\beta} + h \sum_{\vert \beta \vert = \vert \alpha \vert + 2} d_{\alpha}^{\beta} a_{\beta} + \CO ( h^{\infty} ) ,
\end{equation}
for some complex numbers $c_{\alpha}^{\beta} , d_{\alpha}^{\beta}$ independent of $h$. For $\alpha \neq \alpha^{0}$, the factor on the left is invertible and then
\begin{equation*}
\forall \alpha \neq \alpha^{0} , \qquad a_{\alpha} = \sum_{\vert \beta \vert \leq \vert \alpha \vert - 1} \SG_{\alpha}^{\beta} ( \sigma ) a_{\beta} + h \sum_{\vert \beta \vert = \vert \alpha \vert + 2} \widetilde{\SG}_{\alpha}^{\beta} ( \sigma ) a_{\beta} + \CO ( h^{\infty} ) ,
\end{equation*}
for some functions $\SG_{\alpha}^{\beta} , \widetilde{\SG}_{\alpha}^{\beta}$ holomorphic in $B ( \sigma_{0} , \nu )$, for some $\nu > 0$, and independent of $h$. Then, a recurrence over $K \in \N$ and then over $\vert \alpha \vert$ implies that
\begin{equation} \label{a64}
\forall \alpha \neq \alpha^{0} , \qquad a_{\alpha} = \sum_{k = 0}^{K} h^{k} \SG_{\alpha , k} ( \sigma ) a_{\alpha^{0}} + \CO ( h^{K + 1 - \mu} ) ,
\end{equation}
where the $\SG_{\alpha , k}$'s are as before and $\SG_{\alpha , 0} = 0$ for $\vert \alpha \vert \leq \vert \alpha^{0} \vert - 1$. In particular, the $\SG_{\alpha , k}$'s are independent of $K$. Finally, inserting these relations in \eqref{a60} with $\alpha = \alpha^{0}$ gives
\begin{equation} \label{a68}
\sum_{k = 0}^{K} h^{k} \SG_{k} ( \sigma ) a_{\alpha^{0}} = \CO ( h^{K + 1 - \mu} ) ,
\end{equation}
where the $\SG_{k}$'s are as before and
\begin{equation} \label{a61}
\SG_{0} ( \sigma ) = \sum_{j = 1}^{n} \lambda_{j} \Big( \frac{1}{2} + \alpha_{j}^{0} \Big) - i \sigma = i ( \sigma_{0} - \sigma ) .
\end{equation}
Using Borel's lemma, one can construct a function $\SG ( \sigma , h )$ holomorphic in $\sigma \in B ( \sigma_{0} , \nu )$ and $C^{\infty}$ in $h \in [ - 1 , 1 ]$ such that $\SG ( \sigma , h ) \simeq \sum_{k \geq 0} h^{k} \SG_{k} ( \sigma )$ as $h \to 0$. Then, \eqref{a68} gives
\begin{equation} \label{a63}
\SG ( \sigma , h ) a_{\alpha^{0}} = \CO ( h^{\infty} ) .
\end{equation}

From \eqref{a61} and the implicit function theorem, there exists $\varepsilon > 0$ and a (unique) $\sigma_{\infty} ( h )$ smooth near $0$ such that
\begin{equation} \label{a62}
\SG ( \sigma_{\infty} ( h ) , h ) = 0 ,
\end{equation}
and $\sigma_{\infty} ( h ) \in B ( \sigma_{0} , \varepsilon )$. In particular, $\sigma_{\infty} ( 0 ) = \sigma_{0}$. We now define $z_{\infty} ( h ) = E_{0} + h \sigma_{\infty} ( h )$. Since $h \mapsto \sigma_{\infty} ( h )$ is smooth, the Taylor formula shows that $z_{\infty} ( h ) \simeq E_{0} + E_{1} h + E_{2} h^{2} + \cdots$ and $E_{1} = - i \sum_{j} \lambda_{j} ( \alpha_{j} + 1 / 2 )$. Using \eqref{a61} and \eqref{a62}, we deduce
\begin{equation*}
\vert \SG ( \sigma , h ) \vert = \big\vert \SG ( \sigma , h ) - \SG ( \sigma_{\infty} ( h ) , h ) \big\vert \gtrsim \big\vert \sigma - \sigma_{\infty} ( h ) \big\vert \geq h^{N - 1} ,
\end{equation*}
uniformly for $z \in B ( z_{\infty} ( h ) , \delta h ) \setminus B ( z_{\infty} ( h ) , h^{N} )$ and $h$ small enough. Therefore, \eqref{a63} gives $a_{\alpha^{0}} = \CO ( h^{\infty} )$, and \eqref{a64} implies $\partial_{x}^{\alpha} a ( 0 , h ) = \CO ( h^{\infty} )$ for all $\alpha \in \N^{n}$. Eventually, the proof of Lemma \ref{a35} yields $a ( x , h ) = \CO ( h^{\infty} )$ near $0$ and Proposition \ref{a58} follows.
\end{proof}

\appendix

\section{Microlocal terminology} \label{s7}

The only aim of this appendix is to fix the terminology we use in the paper. Details can be found for example in the textbooks of  Dimassi and Sj\"ostrand \cite{DiSj99_01}, Martinez \cite{Ma02_02}, Robert \cite{Ro87_01} and Zworski \cite{Zw12_01}.

A function $a (x , h ) \in C^{\infty} ( \R^{d} \times ] 0 , h_{0} ])$ is a symbol in the class $S ( 1 )$ when
\begin{equation*}
\forall \alpha \in \N^{d}, \quad \exists C_{\alpha} > 0 , \quad \forall h \in ] 0 , h_{0} ] , \quad \forall x \in \R^{d} , \qquad \vert \partial^{\alpha}_{x} a ( x , h ) \vert \leq C_{\alpha} .
\end{equation*}
For a positive function $m ( h )$ of $h$ only, we denote $S ( m ( h ) ) = m ( h ) S ( 1 )$. The Weyl quantization of $a ( x , \xi , h ) \in S ( 1 )$ is the operator defined on $\CS^{\prime} ( \R^{n} )$ by
\begin{equation*}
( \Op ( a ) u ) ( x ) = \frac{1}{( 2 \pi h)^{n}} \iint e^{i ( x - y ) \cdot \xi / h} a \Big( \frac{x + y}{2} , \xi , h \Big) u( y ) \, d y \, d \xi .
\end{equation*}
We denote $\Psi ( 1 ) = \Op ( S ( 1 ) )$ the set of pseudodifferential operators with symbol in $S ( 1 )$, and $\Psi ( m ( h ) ) = m ( h ) \Psi ( 1 )$. For $a \in S ( 1 )$, the Calder\'{o}n--Vaillancourt theorem asserts that $\Op ( a )$ is bounded on $L^{2} ( \R^{n} )$ uniformly with respect to $h$.

A function $u ( x , h ) \in L^{2} ( \R^{n} )$ is polynomially bounded if  $\Vert u \Vert = \CO ( h^{- C} )$ for some $C \in \R$. We say that a polynomially bounded function $u$ vanishes microlocally near a bounded set $V \subset T^{*} \R^{n}$ when there exists a plateau function $a ( x , \xi ) \in S ( 1 )$ above $\overline{V}$ such that
\begin{equation*}
\big\Vert \Op ( a ) u \big\Vert = \CO ( h^{\infty} ) .
\end{equation*}
The complement of the set of points where $u$ vanishes microlocally is called the microsupport of $u$, and we denote it by $\MS ( u )$.

A Lagrangian manifold $\Lambda$ of $T^{*} \R^{n}$ is a smooth manifold of dimension $n$ on which the canonical symplectic two-form $\sigma = d \xi \wedge d x$ vanishes. If $\Lambda$ projects nicely on the base space near some point, there exists a smooth function $\varphi$ such that $\Lambda$ can be written $\Lambda = \Lambda_{\varphi} = \{ ( x , \xi ) ; \ \xi = \nabla \varphi ( x ) \}$ near that point. In that case, $\varphi$ is called a generating function of $\Lambda$.

A Lagrangian distribution of order $m ( h )$ associated to a Lagrangian manifold $\Lambda$ is a polynomially bounded function $u$ with $\MS ( u ) \subset \Lambda$ such that, microlocally near each point $\rho \in \Lambda$, $u$ can be written up to a partial Fourier transform as
\begin{equation*}
u ( x ) = a ( x , h ) e^{i \varphi ( x ) / h },
\end{equation*}
where $\varphi$ is a generating function of $\Lambda$ near $\rho$ and $a \in S ( m ( h ) )$.

\bibliographystyle{amsplain}
\providecommand{\bysame}{\leavevmode\hbox to3em{\hrulefill}\thinspace}
\providecommand{\MR}{\relax\ifhmode\unskip\space\fi MR }
% \MRhref is called by the amsart/book/proc definition of \MR.
\providecommand{\MRhref}[2]{%
  \href{http://www.ams.org/mathscinet-getitem?mr=#1}{#2}
}
\providecommand{\href}[2]{#2}

\end{document}